\newcommand{\B}{\mathbb{B}}
\newcommand{\J}{\mathbb{J}}
\newcommand{\N}{\mathbb{N}}
\newcommand{\Q}{\mathbb{Q}}
\newtheorem{theorem}{{\bf Theorem}}
\newtheorem{corollary}{{\bf Corollary}}
\newtheorem{proposition}{\noindent {\bf Proposition}}
\newtheorem{lemma}{\noindent {\bf Lemma}}
\newtheorem{claim}{\noindent {\bf Claim}}
\newtheorem{question}{\noindent {\bf Question}}
\newtheorem{questions}[question]{Questions}
\def\endproof{\hfill {\kern 6pt\penalty 500
   \raise -0pt\hbox{\vrule \vbox to5pt {\hrule width 5pt
  \vfill\hrule}\vrule}}}
\begin{document}
\title[The length of chains in algebraic lattices]{The length of chains in  algebraic lattices}\label{chap:wellfoundedbis}
\author{Ilham Chakir }
\address{Math\'ematiques, Universit\'e Hassan $1^{er}$,
 Facult\'e des Sciences et Techniques,
 Settat, Maroc}
 \email {ilham.chakir@univ-lyon1.fr}
 \author{Maurice Pouzet}\address{Math\'ematiques, ICJ, Universit\'e Claude-Bernard Lyon1, 43 Bd 11 Novembre 1918, 69622 Villeurbanne Cedex, France}
\email{pouzet@univ-lyon1.fr}

\thanks{Research done under the auspices of  INTAS  project Universal Algebra and Lattice Theory}

\keywords{Ordered sets, Algebraic lattices,  Modular lattices, Length of chains, Scattered lattices, Well-founded lattices.}

\subjclass[2000]{Partially ordered sets and lattices (06A12, 06B35)}

\date{\today }

\begin{abstract}We study how the existence in an
algebraic lattice $L$ of a chain of a given type  is reflected in the join-semilattice $K(L)$ of
its compact elements. We show that for every  chain $\alpha$ of size
$\kappa$, there is a set $\B$ of  at most $2^{\kappa}$
join-semilattices, each one having  a least element such that an
algebraic lattice $L$ contains no chain of order type $I(\alpha)$ if
and only if the join-semilattice $K(L )$ of its compact elements
contains no join-subsemilattice isomorphic to a member of $\B$. We
show that among the join-subsemilattices of $[\omega]^{<\omega}$
belonging to $\B$, one is embeddable in all the others. We
conjecture that if $\alpha$ is countable, there is a finite   set $\B$.
\end{abstract}
\maketitle

 \section*{Introduction}
 This paper is about the relationship between the length of chains in an algebraic lattice $L$ and the structure of the join-semilattice $K(L)$ of the compact elements of $L$.  

We started such an investigation  in \cite{chak}, \cite{cp}, \cite{chapou}, \cite{chapou2}. Our motivation came from posets.
Let $P$ be an ordered set (poset). An \emph{ideal} of $P$ is any non-empty up-directed initial segment of $P$.  The set $J(P)$  of ideals of $P$, ordered by inclusion, is an interesting poset associated with $P$, and it is natural to ask about the relationship between the two posets.  For a concrete  example, if   $P:= [\kappa]^{<\omega}$   the set, ordered by inclusion, consisting of finite subsets of a set of size
$\kappa$, then $J([\kappa]^{<\omega})$ is isomorphic to $\mathfrak{P}(\kappa)$  the power set of $\kappa$ ordered by inclusion.  In \cite{chapou} we proved:
\begin{theorem} \label{thm0}
A poset $P$ contains a subset isomorphic to
$[\kappa]^{<\omega}$ if and only if $J(P)$ contains a subset isomorphic to
$\mathfrak {P}(\kappa)$.
\end{theorem}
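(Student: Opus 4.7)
The plan is to prove the two implications separately; the easy direction is forward, and the hard direction is the converse, which requires an inductive construction inside the ideals.

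For the forward direction, assume an order-embedding $f : [\kappa]^{<\omega} \hookrightarrow P$. For each $A \subseteq \kappa$ I would set $I_A := \{x \in P : x \leq f(F) \text{ for some } F \in [A]^{<\omega}\}$. This is a non-empty initial segment, and it is up-directed because $f(F), f(F') \leq f(F \cup F')$ with $F \cup F' \in [A]^{<\omega}$; hence $I_A \in J(P)$. The map $A \mapsto I_A$ is clearly monotone, and it reflects the order because whenever $\alpha \in A \setminus B$ the element $f(\{\alpha\})$ lies in $I_A$, and it cannot lie in $I_B$: indeed $f(\{\alpha\}) \leq f(F)$ with $F \in [B]^{<\omega}$ would force $\{\alpha\} \subseteq F \subseteq B$ by order-reflection of $f$, contradicting $\alpha \notin B$.

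For the reverse direction, assume an order-embedding $g : \mathfrak{P}(\kappa) \hookrightarrow J(P)$. I would construct $\phi : [\kappa]^{<\omega} \to P$ by recursion on $|F|$. The key observation is that since $g$ is monotone, $\bigcup \{g(G) : G \subseteq \kappa,\ \alpha \notin G\} = g(\kappa \setminus \{\alpha\})$, so there is a single ``large'' ideal per $\alpha$ that one must avoid. Set $\phi(\emptyset)$ to be any element of the (non-empty) ideal $g(\emptyset)$. For $|F| \geq 1$, for each $\alpha \in F$ one has $F \not\subseteq \kappa \setminus \{\alpha\}$, so order-reflection of $g$ gives $g(F) \not\subseteq g(\kappa \setminus \{\alpha\})$, and I can pick some $z_\alpha \in g(F) \setminus g(\kappa \setminus \{\alpha\})$. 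The finite set $\{z_\alpha : \alpha \in F\} \cup \{\phi(F') : F' \subsetneq F\}$ lies in $g(F)$ (since each $\phi(F') \in g(F') \subseteq g(F)$ by induction), which is directed; pick $\phi(F) \in g(F)$ above all of them.

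For the verification, if $F \subseteq G$ then either $F = G$ or $F \subsetneq G$, in which case $\phi(F)$ is among the elements dominated by $\phi(G)$ at stage $G$, so $\phi(F) \leq \phi(G)$. Conversely, if $F \not\subseteq G$, pick $\alpha \in F \setminus G$; then $G \subseteq \kappa \setminus \{\alpha\}$ gives $\phi(G) \in g(G) \subseteq g(\kappa \setminus \{\alpha\})$, whereas $\phi(F) \geq z_\alpha$ combined with the downward-closure of $g(\kappa \setminus \{\alpha\})$ forces $\phi(F) \notin g(\kappa \setminus \{\alpha\})$, so $\phi(F) \not\leq \phi(G)$. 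This also yields injectivity of $\phi$, and hence an order-isomorphic copy of $[\kappa]^{<\omega}$ inside $P$.

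The main obstacle is the reverse direction: only an order-embedding is provided, not a lattice- or meet-embedding, so meets and joins cannot be pushed through $g$. The trick that saves the day is the identification of $g(\kappa \setminus \{\alpha\})$ as the union of all $g(G)$ with $\alpha \notin G$; this converts the problem of avoiding infinitely many ``bad'' ideals $g(G)$ into avoiding a single ideal for each $\alpha \in F$, which is then handled by finite directedness inside $g(F)$.
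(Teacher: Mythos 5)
Your proof is correct in both directions. Note that the paper itself does not prove Theorem \ref{thm0}: it is quoted from \cite{chapou}, so there is no in-text proof to compare against line by line. Still, your argument fits squarely within the paper's toolkit. The forward direction (sending $A$ to the ideal generated by $\{f(F): F\in[A]^{<\omega}\}$ and using order-reflection of $f$ on singletons) is the standard one. In the hard direction, your key device --- observing that for $F\not\subseteq G$ one has $g(G)\subseteq g(\kappa\setminus\{\alpha\})$ for any $\alpha\in F\setminus G$, so that only the finitely many ideals $g(\kappa\setminus\{\alpha\})$, $\alpha\in F$, need to be avoided, which is then possible by up-directedness of $g(F)$ --- is essentially the same directedness trick the paper uses in the proof of Lemma \ref{lem:11} (Claim \ref{claim:notempty}), where one picks a point of $f(X)$ outside the finite union of the ideals $f(R\setminus\uparrow a)$, $a\in Max(X)$. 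A small stylistic difference: you enforce monotonicity of $\phi$ directly in the recursion by choosing $\phi(F)$ above all previously defined $\phi(F')$ with $F'\subsetneq F$, whereas the paper's machinery (conditions (\ref{eq:notsup}) and (\ref{eq:notleq})) works with a weaker ``independence'' condition and recovers embeddings afterwards via generated ideals; your variant buys a directly verified order-embedding of $[\kappa]^{<\omega}$ into $P$ at no extra cost, and both routes are sound.
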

Maximal chains in $\mathfrak {P}(\kappa)$ are of the form  $I(C)$, where  $I(C)$ is the chain of initial segments of an arbitrary chain $C$ of size $\kappa$
(cf. \cite {bonn-pouz1}). Hence, if $J(P)$ contains a subset isomorphic to
$\mathfrak{P}(\kappa)$ it contains a  copy of $I(C)$ for every  chain $C$ of size $\kappa$, whereas chains in $P$ can be
 small: eg in $P:=  [\kappa]^{<\omega}$ they are finite or have order type $\omega$. What
happens if for a given order type $\alpha$, particularly a countable one,  $J(P)$ contains no chain of type $\alpha$? A partial answer was given by Pouzet, Zaguia, 1984 (cf. \cite{pz} Theorem 4, pp.62).  In order to state their result,  we recall that the  order type $\alpha$ of a chain $C$ is \emph{indecomposable} if $C$ is embeddable in $I$ or in $C\setminus I$ for  every initial segment $I$ of  $C$.

\begin{theorem} \label{thm2}\footnote{In Theorem 4 \cite{pz}, $I(\alpha)$ is replaced  by $\alpha$.  This is due to the fact that  if $\alpha$ is a countable indecomposable order type and $P$ is a poset, $I(\alpha)$ can be embedded into $ J(P)$ if and only if $\alpha$ can be embedded  into $J(P)$. } Given an indecomposable countable order type $\alpha$, there is a finite list of ordered sets $A_{1}^{\alpha}, A_{2}^{\alpha},
\ldots, A_{n_{\alpha}}^{\alpha}$ such that for every poset $P$, the set $J(P)$ of ideals of $P$ contains no chain of type $I(\alpha)$
if and only if $P$ contains no subset isomorphic to one of the $A_{1}^{\alpha}, A_{2}^{\alpha}, \ldots,
A_{n_{\alpha}}^{\alpha}$.
\end{theorem}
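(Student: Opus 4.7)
The plan is a two-step reduction: first pass from $P$ to a countable witness subposet, then extract a finite basis using a well-quasi-ordering argument.

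\emph{First, witness extraction.} Fix a chain $\mathcal{C} = (I_x)_{x \in I(\alpha)}$ in $J(P)$. Choose a countable dense subchain $D \subseteq I(\alpha)$; for each $x \in D$ pick $a_x \in I_x \setminus \bigcup_{y < x,\, y \in D} I_y$, and, using the up-directedness of each $I_x$, add a common upper bound inside $I_x$ for every finite subset of $\{a_y : y \in D,\, y \leq x\}$. The resulting countable subset $Q \subseteq P$ satisfies $I(\alpha) \hookrightarrow J(Q)$ via $x \mapsto I_x \cap Q$. Since any embedding $A_i^\alpha \hookrightarrow Q$ transfers to $P$, the problem reduces to classifying the countable posets $R$ with $I(\alpha) \hookrightarrow J(R)$.

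\emph{Second, finite basis via wqo.} Let $\mathcal{W}_\alpha$ be the class of such countable posets, quasi-ordered by embeddability. The class is upward-closed: an embedding $f : R \hookrightarrow R'$ induces an order embedding $J(R) \hookrightarrow J(R')$ via $I \mapsto \{y \in R' : y \leq f(x) \text{ for some } x \in I\}$, which transfers any $I(\alpha)$-chain. In a well-quasi-ordered ambient class, the minimal elements of an upward-closed subclass form a finite antichain; these are precisely the desired $A_1^\alpha, \ldots, A_{n_\alpha}^\alpha$. To secure wqo, I would exploit the indecomposability of $\alpha$ to show that minimal witnesses are structurally tame: their chains are scattered of type bounded in terms of $\alpha$, and their non-chain complexity is limited. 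Laver's bqo theorem for countable scattered labelled trees then provides the required wqo of the ambient class. The non-scattered case $\alpha = \eta$ must be handled separately, using the universality of $\eta$ among countable linear orders to collapse the classification to a very short list (and here Theorem~\ref{thm0} already provides strong structural information on $R$).

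\emph{Main obstacle.} The delicate step is the structural tameness of minimal witnesses. One must translate the abstract indecomposability of $\alpha$ into the concrete statement that in a minimal $R \in \mathcal{W}_\alpha$ no element can be removed without destroying the $I(\alpha)$-chain in $J(R)$, and then deduce that the shape of $R$ is dictated by a finite list of possibilities keyed to the Hausdorff-type decomposition of $\alpha$ into $\omega^\gamma$-blocks and their reverses. Carrying this out requires simultaneously a careful dissection of an $I(\alpha)$-chain into indecomposable pieces and a pruning argument on $R$ that eliminates every element not contributing to such a piece; this is the combinatorial heart of the theorem and the point where countability and indecomposability of $\alpha$ are both essential.
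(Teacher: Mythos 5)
You should first note that this paper does not prove Theorem \ref{thm2} at all: it is quoted from Pouzet--Zaguia \cite{pz} (Theorem 4), so the only comparison possible is with that proof, and your proposal does not follow it. On its own terms, your first step already has a flaw: at a point $x$ of $D$ which is a limit of earlier elements, the set $I_x \setminus \bigcup_{y<x,\,y\in D} I_y$ may be empty (the given chain can be ``continuous'' there), so the element $a_x$ need not exist. The repair is to take witnesses $a_{c,c'}\in I_{c'}\setminus I_c$ only for pairs $c<c'$ in a copy of $\alpha$, close off under directedness, and obtain a countable $Q$ with a chain of type $\alpha$ in $J(Q)$; but then to get back a chain of type $I(\alpha)$ in $J(Q)$ you must use precisely the nontrivial fact recorded in the footnote (that for countable indecomposable $\alpha$, $\alpha$ embeds in $J(Q)$ if and only if $I(\alpha)$ does), which you neither invoke nor prove. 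This part is fixable, but not as written.

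The second step contains the genuine gap. The class of countable posets under embeddability is neither well-quasi-ordered nor even well-founded (the crowns form an infinite antichain, and disjoint unions of tails of that family give a strictly descending sequence), so the principle ``minimal elements of an upward-closed class form a finite antichain lying below every member'' is simply unavailable: you need separately (a) that every witness poset embeds a \emph{minimal} witness, and (b) that the minimal witnesses are finite in number, and neither follows from general quasi-order theory here. Your proposal delegates both to the assertion that minimal witnesses are ``structurally tame'' (scattered chains of bounded type, ``limited non-chain complexity'') and then to Laver's theorem for scattered labelled trees, but no argument is offered for the tameness, for the existence of minimal witnesses, or for an encoding of such posets into labelled trees under which bqo transfers to poset embeddability. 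As you concede, this is the combinatorial heart of the theorem, and it is exactly what is missing; the proof in \cite{pz} does not run through a wqo finite-basis argument but through an explicit structural analysis keyed to the indecomposable type $\alpha$, producing concrete obstruction lists (for instance $\{\alpha'\}$ when $\alpha\leq\omega$, and $\{\alpha,\underline{\Omega}(\alpha)\}$ when $\alpha\in\{\omega^{*},\eta\}$), together with the reduction of $I(\alpha)$ to $\alpha$ mentioned above. So the proposal, as it stands, is a plan with the decisive lemma left unproved rather than a proof.
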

Now, if $P$ is a join-semilattice with a least element, $J(P)$ is an algebraic lattice, and moreover every algebraic lattice is isomorphic to the poset $J(K(L))$ of ideals  of the join-semilattice $K(L)$ of the compact elements of $L$ (see \cite{grat}). Due to the importance of algebraic lattices, 
it was natural to ask whether  the two results above have an analog if posets are replaced by join-semilattices and subposets by  join-subsemilattices. This question was the starting  point of our research. 

We immediately observed that the specialization of Theorem \ref{thm0} to this case shows no difference.  Indeed {\it a  join-semilattice $P$ contains a subset isomorphic to
$[\kappa]^{<\omega}$ if and only if it contains a join-subsemilattice isomorphic to
$[\kappa]^{<\omega}$}. Turning to 
the specialization of Theorem \ref{thm2}, we notice that it as to be   quite different and is far from being  immediate.  In fact, we do not know yet whether   for every countable $\alpha$ there is a finite list as in Theorem \ref{thm2}. 

The purpose of this paper is to present the results obtained in that direction. In order to simplify the presentation, we will denote by $\mathbb{J}$ the class of join-semilattices having a least element. If $\mathbb{B}\subseteq \mathbb {J}$, we denote by $Forb( \mathbb B)$ the class of $P\in \mathbb{J}$ which contain no join-subsemilattice isomorphic to a member of  $\mathbb{B}$. If $\alpha$  denotes an order type, we denote by $\mathbb{J}_{\alpha}$ the class of members $P$ of $\mathbb {J}$ such that the lattice $J(P)$ of ideals of $P$ contains a chain of order type $I(\alpha)$. Finally we set $\mathbb{J}_{\neg \alpha}:= \mathbb{J}\setminus  \mathbb{J}_{\alpha}$.

Our first  result expresses that a characterization as  Theorem \ref{thm2} is possible.
\begin{theorem}\label{thmfirst}
For every order type  $\alpha$ there is a subset 
$\mathbb{B}$ of $\mathbb{J}$ of size at most  $2^{\mid\alpha\mid}$ 
such that $\mathbb{J}_{\neg \alpha}= Forb(\mathbb{B})$.
\end{theorem}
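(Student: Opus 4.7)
The plan is to show that membership in $\mathbb{J}_\alpha$ is always witnessed by a join-subsemilattice of small cardinality, so that $\mathbb{B}$ can be taken as the set of isomorphism types of such small witnesses. The main technical step is to extract, from any embedding of $I(\alpha)$ into $J(P)$, a join-subsemilattice $S$ of $P$ of size at most $\max(|\alpha|,\aleph_0)$ which is itself in $\mathbb{J}_\alpha$.

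Fix $P\in\mathbb{J}_\alpha$ and an order embedding $\varphi:I(\alpha)\to J(P)$. For each $a\in\alpha$ set $J_a:=\{b\in\alpha:b\le a\}$ and $J_a^{<}:=J_a\setminus\{a\}$; since $\varphi(J_a^{<})\subsetneq\varphi(J_a)$, pick $p_a\in\varphi(J_a)\setminus\varphi(J_a^{<})$. Let $S$ be the join-subsemilattice of $P$ generated by $\{p_a:a\in\alpha\}\cup\{0_P\}$; its elements are the finite joins of the $p_a$ together with $0_P$, so $|S|\le\max(|\alpha|,\aleph_0)$. The key monotonicity observation is that whenever $p_a\le p_{a_1}\vee\cdots\vee p_{a_n}$ with $a^{\ast}:=\max(a_1,\ldots,a_n)$, then $a\le a^{\ast}$: each $p_{a_i}$ lies in the ideal $\varphi(J_{a^{\ast}})$, and ideals in a join-semilattice are closed under finite joins, so $p_a\in\varphi(J_{a^{\ast}})$; but $a>a^{\ast}$ would force $\varphi(J_{a^{\ast}})\subseteq\varphi(J_a^{<})$, contradicting the choice of $p_a$.

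To see $S\in\mathbb{J}_\alpha$, define $\psi:I(\alpha)\to J(S)$ by
\[
\psi(J):=\{x\in S:x\le p_{a_1}\vee\cdots\vee p_{a_n}\text{ for some finite }\{a_1,\ldots,a_n\}\subseteq J\}\cup\{0_P\}.
\]
Each $\psi(J)$ is an ideal of $S$ (non-empty, an initial segment, and up-directed because $F_1,F_2\subseteq J$ gives $F_1\cup F_2\subseteq J$), and $\psi$ is inclusion-preserving. Strictness uses the monotonicity observation: if $J\subsetneq J'$ and $a\in J'\setminus J$, then every $b\in J$ satisfies $b<a$ (because $J$ is an initial segment of the chain $\alpha$), so no finite join $\bigvee_{b\in F}p_b$ with $F\subseteq J$ can dominate $p_a$; hence $p_a\in\psi(J')\setminus\psi(J)$, and $J(S)$ contains a chain of type $I(\alpha)$.

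Finally, let $\mathbb{B}$ consist of all isomorphism types of elements of $\mathbb{J}_\alpha$ of cardinality at most $\max(|\alpha|,\aleph_0)$ (for $\alpha$ finite one may simply take $\mathbb{B}$ to consist of a single chain of size $|\alpha|+1$). Since there are at most $2^{|\alpha|}$ semilattice operations on a set of cardinality $|\alpha|$, we get $|\mathbb{B}|\le 2^{|\alpha|}$. Any $P\in\mathbb{J}_\alpha$ contains a copy of some member of $\mathbb{B}$ by the construction above; conversely, if $P$ contains a join-subsemilattice $S$ isomorphic to some $B\in\mathbb{B}$, any chain $(K_i)$ of ideals of $S$ of type $I(\alpha)$ lifts to the chain $L_i:=\{x\in P:x\le y\text{ for some }y\in K_i\}$ of ideals of $P$ of the same type (since $L_i\cap S=K_i$), so $P\in\mathbb{J}_\alpha$. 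This gives $\mathbb{J}_{\neg\alpha}=Forb(\mathbb{B})$. The main obstacle is establishing the monotonicity observation and deducing from it the strictness of $\psi$; the remaining verifications are routine bookkeeping on ideals and initial segments.
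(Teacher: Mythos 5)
Your proof is correct and follows essentially the same route as the paper: you extract a small witness by choosing $p_a\in\varphi(J_a)\setminus\varphi(J_a^{<})$, take the join-subsemilattice it generates, re-embed $I(\alpha)$ into its ideal lattice, and then bound the number of isomorphism types by $2^{|\alpha|}$ --- this is exactly the content of the paper's Lemma~\ref{lem:11} and Theorem~\ref{1.4} (Claim~\ref{claim:counting}), specialized directly to the chain case, with your ``monotonicity observation'' playing the role of condition~(\ref{eq:notleq}) and your $\psi$ playing the role of the implication $(\ref{item:3})\Rightarrow(\ref{item:0})$. The only points left implicit (that $p_a\neq 0_P$ because every ideal contains $0_P$, and the verification of the finite-$\alpha$ case via the single chain) are routine and easily filled.
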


This is very weak. Indeed,  we cannot answer the following question.

\begin{question}
If $\alpha$ is countable, is there a finite  $\B$?
\end{question}

Examples are in order. Let $\omega$ be the order type of the chain $\N$ of non negative integers equipped with the natural order, $\omega^*$ be the order type of $\N$ equipped with the reverse order and   $\eta$ be the order type of the chain $\Q$ of rational numbers. Let  $\underline\Omega(\omega^*)$ be the join-semilattice obtained by adding a least element to the  set $[ \omega]^2$ of two-element subsets of $\omega$, identified to pairs  $(i,j)$, $i<j<\omega$, ordered so that
$(i,j)\leq (i',j')$ if and only if $i'\leq i$ and
$j\leq j'$ (see Figure 1).

\begin{figure}[htbp]
\centering
\includegraphics[width=3in]{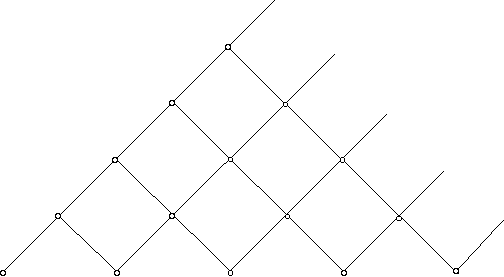}
\caption{$\underline {\Omega}(\omega^*)$}
\label{Omega}
\end{figure}

Furthermore, let $\underline{\Omega}(\eta)$ be the poset represented Figure \ref {Omegachap2}. It is immediate to see that if $\alpha$ is finite or $\omega$, then the list in Theorem  \ref{thm2} has just one member, namely $A_1^{\alpha}=\{\alpha'\}$ (where  $\alpha'$ is such  that $\alpha=1+\alpha'$). In this case, the specialization to join-semilattice yields the same result.  If $\alpha\in \{\omega^*,\eta\}$, it was proved in \cite {pz} that the list has two members: $\alpha$ and $\underline{\Omega}(\alpha)$. In this last case the specialization to join-semilattice is certainly different: the analogous list has at least three members, namely  $\alpha$,  $[\omega]^{<\omega}$ and $\underline\Omega(\alpha)$. If $\alpha=\omega^*$, these three members suffice. If $\alpha=\eta$, we do not know. These very specific cases take into account important classes of posets. Let us say  that a poset $P$ is \emph{well-founded}, resp. \emph{scattered},  if it contains no chain  of type $\omega^*$, resp. $\eta$. We have:
\begin{theorem}(Theorem 1.3 \cite{chapou2})\label{thm4chap2}
An algebraic lattice $L$  is well-founded if and only if
$K(L)$ is well-founded  and contains no join-subsemilattice  isomorphic to $\underline \Omega(\omega^*)$ or to $[\omega]^{<\omega}$.
\end{theorem}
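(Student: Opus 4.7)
\emph{Forward direction.} Well-foundedness of $L$ trivially descends to its subposet $K(L)$. Suppose now that $S \subseteq K(L)$ is a join-subsemilattice isomorphic to $[\omega]^{<\omega}$ or to $\underline{\Omega}(\omega^*)$. The map $\varphi : J(S) \to L$, $I \mapsto \bigvee_L I$, is an order embedding: if $x \in I_1 \setminus I_2$ then $x$ is compact, so $x \leq \bigvee I_2$ would, by up-directedness of $I_2$ as a subset of $K(L)$, yield $y \in I_2$ with $x \leq y$, forcing $x \in I_2$ since $I_2$ is an initial segment of $S$---a contradiction. But both $J([\omega]^{<\omega}) \cong \mathfrak{P}(\omega)$ and $J(\underline{\Omega}(\omega^*))$ contain chains of type $\omega^*$ (in the latter, the ideals $\{\bot\} \cup \{(i,j) \in [\omega]^2 : i \geq n\}$ are strictly decreasing in $n$), so $L$ would too.

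\emph{Converse, setup.} Suppose $K(L)$ is well-founded and $L$ is not. Fix a descending chain $x_0 > x_1 > \cdots$ in $L$ and, for each $n$, a compact $c_n \leq x_n$ with $c_n \not\leq x_{n+1}$. Then $c_n \leq x_m \iff m \leq n$, which yields the \emph{level estimate}: $\bigvee_{b \in B} c_b \leq x_{\min B}$ for any nonempty finite $B \subseteq \omega$, so $c_a \leq \bigvee_{b \in B} c_b$ forces $a \geq \min B$. In particular, no pair $c_n \leq c_m$ with $n < m$ occurs; by Ramsey's theorem on pairs together with the well-foundedness of $K(L)$ (which forbids an infinite descending subsequence), we may pass to an antichain subsequence, still denoted $(c_n)$.

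\emph{Case analysis by iterated Ramsey.} Partition the ordered triples $n < m < \ell$ by whether $c_\ell \leq c_n \vee c_m$. If the uniform color is \emph{yes}, then $c_{2k} \vee c_{2k+1}$ majorizes every $c_a$ with $a \geq 2k$, so $(c_{2k} \vee c_{2k+1})_{k \geq 0}$ is a strictly decreasing chain in $K(L)$ (strictness by the level estimate applied to $c_{2k} \not\leq c_{2k+2} \vee c_{2k+3}$), contradicting the well-foundedness of $K(L)$. Otherwise, partition the ordered triples $n < \ell < m$ by whether $c_\ell \leq c_n \vee c_m$. If the uniform color is \emph{yes}, then $(i,j) \mapsto c_i \vee c_j$ (together with $\bot \mapsto \bot_{K(L)}$) is a join-subsemilattice embedding of $\underline{\Omega}(\omega^*)$ into $K(L)$: the level estimate recovers the first coordinate, the present pattern supplies the inequalities demanded by the order of $\underline{\Omega}(\omega^*)$, and the excluded previous case rules out spurious ones. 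If the uniform color is \emph{no}, no $2$-ary collapse $c_a \leq c_b \vee c_{b'}$ with $a \notin \{b, b'\}$ occurs; iterating the same two-alternative Ramsey partition on $k$-tuples for each $k \geq 3$ produces at each stage either a descending chain in $K(L)$, a copy of $\underline{\Omega}(\omega^*)$, or the absence of $k$-ary collapses. If no collapse occurs at any arity, $A \mapsto \bigvee_{a \in A} c_a$ embeds $[\omega]^{<\omega}$ as a join-subsemilattice of $K(L)$. The principal obstacle is precisely the verification of the ``straddling'' case: showing, from the pattern $c_\ell \leq c_n \vee c_m$ for $n < \ell < m$ combined with the exclusion of the ``right-side'' pattern, that $(i,j) \mapsto c_i \vee c_j$ is simultaneously injective, join-preserving, and order-reflecting, each of which rests on a careful interplay of the level estimate with the previously ruled-out configurations; the higher-arity iterations add bookkeeping but no genuinely new ideas.
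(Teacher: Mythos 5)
The paper does not actually prove this statement --- it quotes it as Theorem 1.3 of \cite{chapou2} --- so your attempt can only be judged on its own terms. Your forward direction is fine: the map $I\mapsto\bigvee_L I$ on ideals of a join-subsemilattice of $K(L)$ is an order embedding by compactness, and both $\mathfrak P(\omega)$ and $J(\underline\Omega(\omega^*))$ contain $\omega^*$-chains. The binary Ramsey analysis in the converse is also sound: the ``level estimate'' plus the exclusion of the right-hand pattern does make $(i,j)\mapsto c_i\vee c_j$ injective, join-preserving and order-reflecting, so the step you flag as the principal obstacle in fact goes through.

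The genuine gap is in the last third, where you say the higher-arity iterations ``add bookkeeping but no genuinely new ideas.'' Two problems. First, at arity $k$ a homogeneous straddling collapse (absorbed index in position $p$, $2\le p\le k$) does not yield $\underline\Omega(\omega^*)$ via the same map; one needs a block construction such as $(i,j)\mapsto\bigvee\{c_t:t\in A_i\}\vee\bigvee\{c_s:s\in B_j\}$ with $|A_i|=p-1$, $|B_j|=k-p+1$, and a separate check (using the excluded right-collapse at arity $k$) that it is order-reflecting --- this can be done, but it is not in your text. Second, and more seriously, your conclusion ``if no collapse occurs at any arity, $A\mapsto\bigvee_{a\in A}c_a$ embeds $[\omega]^{<\omega}$'' presupposes a single infinite index set with no collapses of \emph{any} arity, whereas your argument produces, for each $k$ separately, an infinite homogeneous set $N_{k+1}\subseteq N_k$ with no $k$-ary collapses. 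The intersection of the $N_k$ may be finite, and the diagonal set $\{n_k:n_k\in N_k\}$ does not inherit the arity-$m$ homogeneity for $(m+1)$-tuples containing early diagonal points (the level estimate only kills the case where the absorbed index is the minimum). Trying to repair this with parameterized colorings (fixing a finite set $F$ of already-chosen indices inside the absorbing set) destroys the descending-chain argument, because with small indices in $F$ the level estimate no longer forces strictness of the chain $d_m$. This passage --- from an infinite antichain of compacts to an infinite independent set, in the presence of the two alternative outcomes --- is exactly the heart of the theorem (it is where the dichotomy of Corollary 1.1 of \cite{chapou2} lives), and it is not a routine fusion; as written, your plan does not close it.
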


\begin{figure}[htbp]
\centering
\includegraphics[width=2.5in]{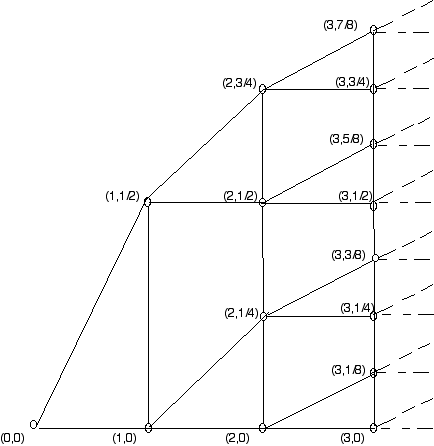}
\caption{$\underline \Omega(\eta)$}
\label{Omegachap2}
\end{figure}

\begin{question} Is it true that an algebraic lattice $L$  is scattered if and only if
$K(L)$ is scattered   and contains no join-subsemilattice  isomorphic to $\underline \Omega(\eta)$ or to $[\omega]^{<\omega}$?\end{question}

The join-semilattice  $[\omega]^{<\omega}$ never appeared in the list mentionned in  Theorem \ref {thm2} but,  as Theorem \ref{thm4chap2} illustrates, it might  appear in the specialization to join-semilattices. This raises two questions. For which $\alpha$ it appears? If it does not appears, are they particular join-semilattices of $[\omega]^{<\omega}$ which appear? Here are the answers:

\begin{theorem}\label{ordinal}
Let $\alpha$ be a countable order type. 
\begin{enumerate}[{(i)}]
\item  The join-semilattice $[\omega]^{<\omega}$ belongs  to every list $\mathbb B$ characterizing the class $\J_{\neg \alpha}$ of join-semilattices $P$ such that $J(P)$ contains no chain of type $I(\alpha)$  if and only if $\alpha$  is not an ordinal.
\item If $\alpha$ is an ordinal, then among the join-subsemilattices $P$ of  $[\omega]^{<\omega}$  which  does not belong to $\J_{\neg\alpha}$ there is one, say $Q_{\alpha}$,  which embeds as a join-semilattice in all the others.
\end{enumerate}
\end{theorem}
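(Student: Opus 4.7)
My plan is to establish (ii) first, because the ``only if'' direction of (i) uses the minimum object $Q_\alpha$ built there, while the ``if'' direction of (i) is a consequence of Theorem \ref{thm4chap2} together with a lemma that isolates $[\omega]^{<\omega}$ among its own join-subsemilattices. For the ``if'' direction of (i), assume $\alpha$ is not an ordinal. Then $\alpha$ contains $\omega^*$ as a subchain, so $I(\alpha)$ contains an $\omega^*$ chain, and any $P \in \J_\alpha$ has $J(P)$ non-well-founded. Theorem \ref{thm4chap2} then forces $P$ to be non-well-founded, or to contain $\underline\Omega(\omega^*)$, or to contain $[\omega]^{<\omega}$ as join-subsemilattice. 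If $P$ is itself a join-subsemilattice of $[\omega]^{<\omega}$, the first option is ruled out, and the second by the key \textbf{lemma}: \emph{$\underline\Omega(\omega^*)$ does not embed as a join-subsemilattice of $[\omega]^{<\omega}$.} To prove the lemma, suppose $\phi:\underline\Omega(\omega^*)\hookrightarrow [\omega]^{<\omega}$ and set $A_l = \phi(l,l+1)$. From the identity $(k,m+1) = (k,k+1)\vee(m,m+1) = \bigvee_{l=k+1}^{m+1}(l-1,l)$ in $\underline\Omega(\omega^*)$, join-preservation forces $A_l \subseteq A_k \cup A_m$ for every $k<l<m$. Injectivity of $\phi$ on the pairs $(i,j),(i+1,j)$ as $j$ varies requires $A_i \not\subseteq \bigcup_{m>i} A_m$ (otherwise a finite union $A_{i+1}\cup\dots\cup A_{j-1}$ would cover $A_i$ for $j$ large enough), so we can pick a witness $x_i \in A_i \setminus \bigcup_{m>i} A_m$. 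The containment above confines any such $x_i$ to $A_0 \cap A_1 \cap \dots \cap A_i$, hence to $A_0$, and distinct $i$'s yield distinct $x_i$'s because $x_j \in A_{i+1}$ while $x_i \notin A_{i+1}$ whenever $i<j$. This places infinitely many elements in the finite set $A_0$, a contradiction. So any $P \subseteq [\omega]^{<\omega}$ with $P \in \J_\alpha$ must contain $[\omega]^{<\omega}$, and hence be isomorphic to it; this uniqueness makes $[\omega]^{<\omega}$ belong to every characterizing $\B$.

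For (ii), I construct $Q_\alpha \subseteq [\omega]^{<\omega}$ by transfinite recursion on the countable ordinal $\alpha$: trivially at $0$; for $\alpha = \beta + 1$, graft onto $Q_\beta$ (realised on some $X_\beta \subseteq \omega$) a fresh branch on new coordinates producing an extra ideal above the exhibited chain of $J(Q_\beta)$; for $\alpha$ limit, fix a cofinal sequence $\alpha_n \uparrow \alpha$, realise the $Q_{\alpha_n}$ on disjoint subsets of $\omega$ and take their union. At each stage one checks by inspection that $J(Q_\alpha)$ contains an explicit chain of order type $\alpha+1$. The \emph{minimality} clause is then proved by a parallel induction: given $P \subseteq [\omega]^{<\omega}$ with an ideal chain $(I_\gamma)_{\gamma \leq \alpha}$ in $J(P)$, at a successor $\gamma + 1$ one picks $x \in I_{\gamma+1}\setminus I_\gamma$ and, after joining it with the currently embedded image of $Q_\gamma$ (the result stays in $I_{\gamma+1}\setminus I_\gamma$ since $I_\gamma$ is downclosed), extends the embedding to one of $Q_{\gamma+1}$; at a limit $\gamma$, the embeddings $Q_{\alpha_n} \hookrightarrow P$ provided by the induction hypothesis must be reconciled into a single embedding $Q_\gamma \hookrightarrow P$, which relies on the local finiteness of $[\omega]^{<\omega}$ and a K\"onig-style compactness argument. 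I expect this limit case to be the main technical obstacle of the theorem.

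Finally, for the ``only if'' direction of (i), I use $Q_\alpha$ from (ii). By construction the ideal chain of type $\alpha+1$ exhibited in $J(Q_\alpha)$ is well-ordered, and one verifies that $J(Q_\alpha)$ has no $\omega^*$ chain at all; thus by Theorem \ref{thm4chap2}, $Q_\alpha$ contains neither $\underline\Omega(\omega^*)$ nor $[\omega]^{<\omega}$ as join-subsemilattice, and in particular $Q_\alpha \not\cong [\omega]^{<\omega}$. Starting from any characterizing list $\B$ supplied by Theorem \ref{thmfirst}, replacing $[\omega]^{<\omega}$ (if present) by $Q_\alpha$ yields a new list that is still characterizing --- any $P \in \J_\alpha$ previously witnessed by $[\omega]^{<\omega}$ is now witnessed by $Q_\alpha$, since $Q_\alpha \hookrightarrow [\omega]^{<\omega} \hookrightarrow P$ --- and does not contain $[\omega]^{<\omega}$, establishing the dichotomy.
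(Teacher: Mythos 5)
Your treatment of the ``if'' direction of (i) is essentially sound and takes a slightly different route from the paper: your lemma that $\underline\Omega(\omega^*)$ admits no join-embedding into $[\omega]^{<\omega}$ is correct (the argument with the sets $A_l=\phi(l,l+1)$ and the witnesses $x_i\in A_i\setminus\bigcup_{m>i}A_m$ forced into the finite set $A_0$ works), and combined with Theorem \ref{thm4chap2} it yields what the paper obtains more directly from the dichotomy of Theorem \ref{thm:posetchap2} (a join-subsemilattice of $[\omega]^{<\omega}$ is w.q.o.\ or contains a copy of $[\omega]^{<\omega}$). One misstep there: ``must contain $[\omega]^{<\omega}$, and hence be isomorphic to it'' is false --- e.g.\ $\{\emptyset\}\cup\{X\in[\omega]^{<\omega}: 0\in X\}$ is a join-subsemilattice of $[\omega]^{<\omega}$ containing a join-copy of $[\omega]^{<\omega}$ but has a unique atom, so it is not isomorphic to $[\omega]^{<\omega}$. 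Isomorphism is not needed: minimality of $[\omega]^{<\omega}$ in $\J_\alpha$ under join-embeddability (i.e.\ equimorphy of the witnessing member of $\B$ with $[\omega]^{<\omega}$) is what makes it unavoidable, and that is what your dichotomy argument actually gives.

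The genuine gap is part (ii), on which your ``only if'' half of (i) is also made to depend. The whole content of (ii) is the minimality clause --- that a single $Q_\alpha$ join-embeds into \emph{every} join-subsemilattice $P$ of $[\omega]^{<\omega}$ belonging to $\J_\alpha$ --- and your proposal only gestures at it: the successor step is not a legitimate operation (you cannot ``join $x$ with the currently embedded image of $Q_\gamma$'', an infinite set, inside $P$, and nothing guarantees the embedding built at stage $\gamma$ sits below $I_\gamma$ so as to be extendable), and the limit step is exactly the hard point, which you leave to an unspecified ``K\"onig-style compactness argument'' and yourself flag as the main obstacle. Moreover the properties of $Q_\alpha$ you invoke in the ``only if'' step ($Q_\alpha\in\J_\alpha$, $J(Q_\alpha)$ well-founded, hence $[\omega]^{<\omega}\not\hookrightarrow Q_\alpha$) are asserted ``by inspection'' for a construction that is never pinned down. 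The paper avoids any such transfinite extension of embeddings: from an ideal chain $(I_\beta)_{\beta<\alpha+1}$ in $J(P)$ it picks $x_\beta\in I_{\beta+1}\setminus I_\beta$ and finite $F_\beta\in P$ with $x_\beta\in F_\beta\subseteq I_{\beta+1}$, orders the $x_\beta$ by the transitive closure of ``$x_{\beta'}\in F_{\beta''}$, $\beta'<\beta''$'', checks principal initial segments are finite, and shows $I\mapsto\bigcup\{F_\beta: x_\beta\in I\}$ embeds $I_{<\omega}(R)$ into $P$; refining by a linear extension of type $\omega$ produces a sierpinskisation $S$ of $\alpha$ and $\omega$ with $I_{<\omega}(S)\in\J_\alpha$ join-embeddable in $P$ (Theorem \ref{thm:qalphaprelim}). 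The canonical object is then $Q_\alpha:=I_{<\omega}(S_\alpha)$ with $S_\alpha=\Omega(\alpha')\oplus n$ for $\alpha=\omega\alpha'+n$, and its universality rests on the Pouzet--Zaguia fact (Lemma \ref{lem: proppouzag}) that the monotonic sierpinskisation $\Omega(\alpha')$ embeds in every sierpinskisation of $\omega\alpha'$ and $\omega$ (Lemma \ref{lem:qalphaprelim}); well-foundedness of $J(Q_\alpha)\cong I(S_\alpha)$ comes from w.q.o.\ via Higman (Lemma \ref{lem:wqo}), which is also what the paper's ``only if'' half of (i) really needs (Lemma \ref{lem:finitegenesierp} plus Lemma \ref{lem:wqo}), rather than the full strength of (ii). Without an argument playing the role of this sierpinskisation extraction and the universal property of monotonic sierpinskisations, your (ii) and hence your ``only if'' direction remain unproven.
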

The poset $Q_{\alpha}$ is of the form $I_{<\omega}(S_{\alpha})$ where $S_{\alpha}$ is some sierpinskisation of $\alpha$ and $\omega$. We recall that a \emph{sierpinskisation} of a countable order type $\alpha$ and $\omega$, or simply of $\alpha$,  is any poset $(S, \leq)$ such that the order on $S$ is the intersection of two linear orders on $S$, one of type $\alpha$,  the other of type $\omega$. Such a  sierpinskisation can   be obtained from a bijective map $\varphi:\omega \rightarrow \alpha$, setting $S:=\N$ and $x\leq y$ if $x\leq y$ w.r.t. the natural order on  $\N$ and  $\varphi(x)\leq \varphi(y)$ w.r.t. the order of type  $\alpha$. Let $\omega.\alpha$ be the ordinal sum of $\alpha$ copies of the chain $\omega$; a sierpinskization  of $\omega.\alpha$ and $\omega$ is \emph{monotonic} if it obtained from a bijective map  $\varphi:\omega\rightarrow \omega\alpha$ such that $\varphi^{-1}$ is order-preserving on each subset of the form $\omega\times \{\beta\}$ where $\beta\in \alpha$. 
With that in hand, if $\alpha$ is an ordinal, we set $S_{\alpha}:=\alpha$ if $\alpha<\omega$. If   $\alpha=\omega\alpha'+n$ with  $\alpha'\not =0$ and  $n<\omega$,  let  $S_{\alpha}:=\Omega(\alpha')\oplus n$  be the direct sum of  $\Omega(\alpha')$ and the chain $n$, where  $\Omega(\alpha')$ is a monotonic sierpinskisation  of  $\omega\alpha'$ and $\omega$. We note that for countably infinite $\alpha$'s, $S_{\alpha}$ is a sierpinskisation of $\alpha$ and $\omega$.

Among the monotonic sierpinskisations of $\omega\alpha$ and $\omega$ there are some which are join-subsemilattices of the direct product $\omega\times \alpha$ that  we call  \emph{lattice sierpinskisations}. Indeed, to every  countable order type   $\alpha$, we may associate a join-subsemilattice $\Omega_L(\alpha)$ of  the direct product $\omega\times\alpha$ obtained via a monotonic sierpinskisation of $\omega\alpha$ and $\omega$. We add a least element, if there is none, and we denote by   $\underline\Omega_L(\alpha)$  the resulting poset. Posets  $\underline {\Omega}(\omega^*)$ and $\underline {\Omega}(\eta)$ represented Figure 1 and Figure 2  fall in this category.  We also associate the join-semilattice $P_{\alpha}$ defined as follows:

If  $1+\alpha\not \leq \alpha$, in which case   $\alpha= n+\alpha'$ with $n<\omega$ and  $\alpha'$ without a first element, we set $P_{\alpha}:=n+\underline \Omega_L(\alpha')$. If not, and  if $\alpha$ is equimorphic \index{equimorphic} to $\omega+ \alpha'$ we set $P_{\alpha}:= \underline \Omega_L(1+\alpha')$, otherwise, we set $P_{\alpha}= \underline \Omega_L(\alpha)$.

The importance of this kind of lattice sierpinskization steems from the following result:
 \begin{theorem}\label{thm:serpinskalpha}

 If $\alpha$ is countably infinite, $P_{\alpha}$ belongs to every list $\B$ characterizing 
 $\J_{\neg \alpha}$.
 \end{theorem}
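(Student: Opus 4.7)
The plan is to identify $P_\alpha$ as a \emph{critical} member of $\J_\alpha$ for the quasi-order of join-subsemilattice embedding, by establishing two claims: (a) $P_\alpha\in \J_\alpha$, i.e.\ $J(P_\alpha)$ contains a chain of type $I(\alpha)$; and (b) every join-subsemilattice $Q\subseteq P_\alpha$ with $Q\in \J_\alpha$ already contains a copy of $P_\alpha$ as a join-subsemilattice. Granted (a) and (b), the theorem follows at once: let $\B$ characterize $\J_{\neg\alpha}$; since $P_\alpha\in \J_\alpha$, some $B\in \B$ embeds as a join-subsemilattice in $P_\alpha$; because $B\in \B$ is itself an obstruction we may assume $B\in \J_\alpha$, and (b) applied to $B$ then yields an embedding of $P_\alpha$ into $B$. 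Combined with $B\hookrightarrow P_\alpha$ and the rigidity of the pair of compatible linear orders built into the monotonic sierpinskisation description of $P_\alpha$, this forces $B\cong P_\alpha$, so that $P_\alpha\in \B$ up to isomorphism.

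Step (a) is the routine one. The second-coordinate projection $\pi_2\colon \underline\Omega_L(\alpha')\to \alpha'$ is surjective and order-preserving, so the preimages of the proper initial segments of $\alpha'$ yield a chain of type $I(\alpha')$ in $J(\underline\Omega_L(\alpha'))$. A short case distinction following the three definitions of $P_\alpha$---according to whether $1+\alpha\not\le \alpha$, whether $\alpha$ is equimorphic to $\omega+\alpha'$, or neither---then upgrades this to a chain of type exactly $I(\alpha)$ in $J(P_\alpha)$.

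Step (b) is the heart of the argument and the main obstacle. Fix $Q\subseteq P_\alpha$ together with a chain $(I_\xi)_{\xi\in I(\alpha)}$ in $J(Q)$. Each element of $Q$, viewed inside $\omega\times\alpha$ via the lattice sierpinskisation, carries an $\omega$-coordinate and an $\alpha$-coordinate, and joins in $Q$ are computed componentwise. The goal is to extract from $Q$, by a transfinite recursion indexed by $\alpha$, a sequence of elements whose $\alpha$-coordinates enumerate a copy of $\omega\alpha$ in the monotonic fashion required of a lattice sierpinskisation; the join-subsemilattice of $Q$ they generate will then itself be a monotonic lattice sierpinskisation of $\omega\alpha$ and $\omega$, and after adjusting the finite-prefix or first-element correction entering the definition of $P_\alpha$, will contain a copy of $P_\alpha$. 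At each stage of the recursion, the strict inclusion $I_\xi\subsetneq I_{\xi+1}$ supplies new elements with the prescribed $\alpha$-coordinate; the delicate point---where the countability of $\alpha$ becomes essential---is that their accompanying $\omega$-coordinates must be chosen to remain compatible with the monotonicity condition and with the demands of all cofinally later stages. The natural route is to thin $Q$ first by a Ramsey- or pigeonhole-type argument on the finite levels of $\omega$, stabilizing the pattern of $\omega$-coordinates, and then to run the recursion in one pass, invoking (for non-ordinal $\alpha$) the fact that $[\omega]^{<\omega}$ itself is already forced into $\B$ by Theorem~\ref{ordinal}(i) whenever the recursion threatens to branch.
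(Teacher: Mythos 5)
The paper does not actually contain a proof of Theorem~\ref{thm:serpinskalpha} (it is explicitly postponed to a companion paper), so there is nothing to compare you against; judged on its own, your proposal has the right architecture but a genuine gap. Your reduction is sound: it suffices to prove (a) $P_\alpha\in\J_\alpha$ and (b) every join-subsemilattice $Q$ of $P_\alpha$ with $Q\in\J_\alpha$ contains a copy of $P_\alpha$, and (b) is indeed unavoidable (if some $Q\le P_\alpha$ in $\J_\alpha$ omitted $P_\alpha$, replacing the copies of $P_\alpha$ in any list by $Q$ would yield a characterizing list avoiding $P_\alpha$). Step (a) is routine, as you say. But (b) is the entire content of the theorem, and your text only gestures at it: ``transfinite recursion'', ``Ramsey- or pigeonhole-type thinning'' and ``run the recursion in one pass'' are not arguments, and the specific mechanisms you describe do not work as stated.

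Concretely: (i) the claim that the new elements of $I_{\xi+1}\setminus I_{\xi}$ come ``with the prescribed $\alpha$-coordinate'' is unjustified --- an $I(\alpha)$-chain of ideals of $Q$ need not align with the second coordinate of $\omega\times\alpha$ at all; the real work (compare Theorem~\ref{thm:qalphaprelim}, where an auxiliary poset $R$ and then a sierpinskisation are manufactured from chosen finite generators $F_\beta$ of the ideals) is precisely to build a sierpinskisation out of such a chain, not to read one off the ambient coordinates. (ii) The escape clause invoking Theorem~\ref{ordinal}(i) is a non sequitur: knowing that $[\omega]^{<\omega}$ belongs to $\B$ says nothing about whether $P_\alpha$ must belong as well, and $[\omega]^{<\omega}$ cannot even arise inside $Q\subseteq P_\alpha$, since $P_\alpha$ embeds (up to a finite part and a bottom) into a product of two chains while $[\omega]^{<\omega}$ has infinite dimension; so when your recursion ``threatens to branch'' nothing rescues it. (iii) The case distinctions in the definition of $P_\alpha$ (e.g.\ taking $\underline\Omega_L(1+\alpha')$ when $\alpha$ is equimorphic to $\omega+\alpha'$, or $n+\underline\Omega_L(\alpha')$ when $1+\alpha\not\le\alpha$) exist precisely because the naive target can fail to embed; your sketch never engages with them, and at minimum an analogue of Lemma~\ref{lem: proppouzag} and Lemma~\ref{lem:qalphaprelim} for arbitrary countable order types is needed. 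Finally, your last step --- ``rigidity forces $B\cong P_\alpha$'' --- is unsupported: (a) and (b) only give a member of $\B$ mutually embeddable with $P_\alpha$, and mutual embeddability of join-semilattices does not in general imply isomorphism, so this too requires a proof.
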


This work leaves open the following questions. 
\begin{questions}
\begin{enumerate}
\item If  $\alpha$ is a countably infinite ordinal, does the minimal obstructions are $\alpha$, $P_{\alpha}$, $Q_\alpha$ and some  lexicographical sums of
obstructions corresponding to smaller  ordinal?
\item  If $\alpha$ is a scattered order type which is not an ordinal, does the minimal  obstructions are
$\alpha$, $P_{\alpha}$, $[\omega]^{<\omega}$ and some  lexicographical sums of
obstructions corresponding to smaller  scattered order types?
\end{enumerate}
\end{questions}

We only  have some examples of ordinals for which the answer to the first question is positive. We conjecture that the answer is always positive. 

To keep the paper at a reasonable length, we present only the proofs of Theorem \ref{thmfirst} and Theorem \ref{ordinal}. The proof of Theorem \ref{thm:serpinskalpha} and the presentation of examples supporting the conjecture are posponed to an other paper. The interested reader can find all the results mentionned here in \cite{chapou4} a paper available from the authors.
\section[Join-semilattices  and obstructions]{Join-semilattices and a proof of Theorem \ref{thmfirst}}
A \emph{join-semilattice} is a poset $P$ such that every two elements $x,y$ have a least upper-bound, or join, denoted by $x\vee y$. If $P$ has a least element, that we denote $0$, this amounts to say that every finite subset of $P$ has a join. In the sequel, we will mostly consider join-semilattices with a least element. Let $Q$ and $P$ be such join-semilattices. A map $f:Q\rightarrow P$ is \emph{join-preserving} if:
\begin{equation}\label{eq:def:joinpreserving}
f(x\vee y)=f(x)\vee f(y)
\end{equation}
for all $x,y\in Q$.

This map \emph{preserves finite (resp. arbitrary) joins} if
\begin{equation}\label{eq:def:finitepreserving}
f(\bigvee X)=\bigvee \{f(x): x\in X\}
\end{equation}
for every finite (resp. arbitrary) subset $X$ of $Q$.

If $P$ is a join-semilattice with a least element, the set $J(P)$ of ideals of $P$ ordered by inclusion is a complete lattice. If $A$ is a subset of  $P$, there is a least ideal  containing $A$, that we denote $<A>$. An ideal $I$ is \emph{generated} by a subset $A$ of $P$ if $I=<A>$.  If $[A]^{<\omega}$ denotes the collection of finite subsets of $A$ we have:
\begin{equation}\label{eq:def:generated}
<A>= \downarrow \{ \bigvee X: X\in [A]^{<\omega}\}
\end{equation}

\begin{lemma} \label{lem:arbitraryjoins}Let $Q$ be a join-semilattice with a least element and  $L$ be a complete lattice.  To a map  $g:Q\rightarrow L$ associate $\overline g: \mathfrak P(Q)\rightarrow L$ defined by setting  $\overline g(X):= \bigvee \{g(x): x\in X \}$ for every   $X \subseteq Q$. Then $\overline g$ induces a map from $J(Q)$ in $L$ which preserves arbitrary joins whenever $g$ preserves finite joins.  \end{lemma}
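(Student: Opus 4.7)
The plan is to check directly that, on the subposet $J(Q)\subseteq \mathfrak{P}(Q)$, the restriction of $\overline g$ sends the join of any family of ideals (computed in the complete lattice $J(Q)$) to the join of the image family in $L$. The key ingredients are the formula~(\ref{eq:def:generated}) describing elements of the ideal generated by a subset of $Q$, together with the fact that any finite-join-preserving map is automatically monotone.

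I would begin with two preliminary remarks. First, if $g$ preserves finite joins and $x\leq y$ in $Q$, then $g(y)=g(x\vee y)=g(x)\vee g(y)$, so $g(x)\leq g(y)$; consequently $\overline g$ is monotone with respect to inclusion on $\mathfrak{P}(Q)$, and in particular on $J(Q)$. Second, interpreting finite-join-preservation to include the empty join gives $g(0_Q)=0_L$, which takes care of the boundary case of the empty family of ideals, whose join in $J(Q)$ is $\{0_Q\}$.

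Now fix a family $(I_\lambda)_{\lambda\in\Lambda}$ in $J(Q)$. Its join in $J(Q)$ is the least ideal containing $\bigcup_\lambda I_\lambda$, namely $\langle\bigcup_\lambda I_\lambda\rangle$, and by~(\ref{eq:def:generated}) every element of this ideal lies below some finite join $\bigvee X$ with $X\subseteq \bigcup_\lambda I_\lambda$ finite. Monotonicity of $\overline g$ yields the easy inequality $\bigvee_\lambda \overline g(I_\lambda)\leq \overline g\bigl(\bigvee_\lambda I_\lambda\bigr)$. For the reverse, given $y\in \langle\bigcup_\lambda I_\lambda\rangle$, I would choose a finite set $\{x_1,\dots,x_n\}\subseteq \bigcup_\lambda I_\lambda$ with $x_i\in I_{\lambda_i}$ and $y\leq x_1\vee\cdots\vee x_n$. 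Monotonicity of $g$ and finite-join-preservation then give
\[ g(y)\leq g(x_1\vee\cdots\vee x_n)=g(x_1)\vee\cdots\vee g(x_n)\leq \overline g(I_{\lambda_1})\vee\cdots\vee \overline g(I_{\lambda_n})\leq \bigvee_\lambda \overline g(I_\lambda), \]
and taking the supremum over $y\in \bigvee_\lambda I_\lambda$ finishes the argument.

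I do not expect any real obstacle here: the statement is a routine consequence of~(\ref{eq:def:generated}) and amounts to the observation that finite joins generate every ideal of $Q$. The only point to keep in mind is to apply~(\ref{eq:def:generated}) so as to dominate each element of $\langle\bigcup_\lambda I_\lambda\rangle$ by a finite join of elements drawn from $\bigcup_\lambda I_\lambda$, which is precisely what allows one to promote finite-join-preservation of $g$ into arbitrary-join-preservation of the induced map $\overline g\colon J(Q)\to L$.
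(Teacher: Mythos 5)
Your proof is correct and follows essentially the same route as the paper: both arguments reduce to the fact that $\bigvee\mathcal I$ in $J(Q)$ is the ideal generated by $\bigcup\mathcal I$, so by~(\ref{eq:def:generated}) every element is dominated by a finite join of elements of the union, which finite-join-preservation of $g$ converts into preservation of the arbitrary join. The paper packages this as a claim that $\overline g(I)=\bigvee\{g(x):x\in A\}$ for any generating set $A$, while you verify the two inequalities directly (and correctly note the empty-join/monotonicity points), but the substance is identical.
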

\begin{proof}

\begin{claim}\label{claim: supgenerates}Let $I\in J(Q)$. If $g$ preserves finite joins and $A$ generates $I$ then $\overline g(I) =\bigvee\{g(x): x\in A\}$.
\end{claim}
\noindent{\bf Proof of {claim} \ref{claim: supgenerates}.} Since $I=<A>$ and $g$ preserves finite joins, $<\{g(x): x\in I\}>=<\{g(x): x\in A\}>$. The claimed equality follows.
 \endproof

Now, let $\mathcal I\subseteq J(Q)$ and $I:=\bigvee \mathcal I$. Clearly, $A:= \bigcup \mathcal I$ generates $I$.   Claim \ref{claim: supgenerates}  yields  $\overline g(I) =\bigvee\{g(x): x\in A\}= \bigvee \bigcup \{\{g(x): x\in J\}: J\in \mathcal I\}=\bigvee \{\bigvee \{g(x): x\in J\}: J\in \mathcal I\}= \bigvee \{\overline g(J): J\in \mathcal I\}$. This proves that $\overline g$ preserves arbitrary joins. \end{proof}
%

\begin{lemma}\label{lem:11} Let $R$ be a poset and $P$ be a join-semilattice with a least element.
The following properties are equivalent:
\begin{enumerate} [{(i)}]
\item \label{item:0}There is an
embedding from $I(R)$ in $J(P)$ which preserves arbitrary joins.

\item \label{item:1}There is an
embedding from $I(R)$ in $J(P)$.

\item \label{item:2}There is a map $g$
from $I_{<\omega}(R)$ in $P$ such that
\begin{equation}  \label{eq:notsup}X\nsubseteq
Y_{1}\cup\ldots\cup Y_{n}   \Rightarrow g(X)\not\leq
g(Y_{1})\vee\ldots\vee g(Y_{n})
\end{equation} for all $X, Y_{1}, \ldots,
Y_{n}\in I_{<\omega}(R)$.

\item\label{item:3} There is a map $h:R \rightarrow
P$ such that \begin{equation} \label{eq:notleq}\forall i (1\leq i \leq
n\Rightarrow x\not\leq y_{i}) \Rightarrow h(x)\not\leq h(y_{1})\vee\ldots\vee h(y_{n})\end{equation} for all
$x, y_{1}, \ldots, y_{n}\in R$.
\end{enumerate}\end{lemma}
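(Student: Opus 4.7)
The plan is to establish the chain $(i) \Rightarrow (ii) \Rightarrow (iv) \Rightarrow (i)$ together with the direct equivalence $(iii) \Leftrightarrow (iv)$. The implication $(i) \Rightarrow (ii)$ is trivial.

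For $(ii) \Rightarrow (iv)$, the key observation is that for each $x \in R$ the two initial segments $\downarrow x$ and $R \setminus \uparrow x$ of $R$ satisfy $\downarrow x \nsubseteq R \setminus \uparrow x$, since $x$ lies in the former but not in the latter. Because $f$ reflects the order, $f(\downarrow x) \nsubseteq f(R \setminus \uparrow x)$, so one can pick $h(x) \in f(\downarrow x) \setminus f(R \setminus \uparrow x)$. To verify (\ref{eq:notleq}), suppose $x \not\leq y_i$ for $i=1,\ldots,n$: then each $y_i \in R \setminus \uparrow x$, so $\bigcup_i \downarrow y_i \subseteq R \setminus \uparrow x$ and each $h(y_i) \in f(\downarrow y_i) \subseteq f(R \setminus \uparrow x)$. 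Since this ideal is closed under finite joins, it contains $h(y_1) \vee \cdots \vee h(y_n)$; if $h(x)$ were below that join, downward closure would force $h(x) \in f(R \setminus \uparrow x)$, contrary to the choice of $h(x)$.

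For $(iv) \Leftrightarrow (iii)$, given $(iv)$ define $g(X) := \bigvee\{h(x) : x \in \max X\}$ (with $g(\emptyset)=0$), using that each $X \in I_{<\omega}(R)$ is generated by its finite set of maximal elements. If $X \nsubseteq Y_1 \cup \cdots \cup Y_n$, pick some $x \in X$ avoiding every $Y_i$ and replace it by a maximal element $x' \in \max X$ above $x$; as the $Y_i$ are initial segments, $x' \notin Y_i$ for any $i$, hence $x' \not\leq y$ for every $y \in \max Y_1 \cup \cdots \cup \max Y_n$. Applying (\ref{eq:notleq}) gives $h(x') \nleq g(Y_1) \vee \cdots \vee g(Y_n)$, and since $h(x') \leq g(X)$ the same non-inequality holds for $g(X)$. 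The converse $(iii) \Rightarrow (iv)$ is the specialisation $h(x) := g(\downarrow x)$, under which (\ref{eq:notleq}) becomes the instance of (\ref{eq:notsup}) with $X = \downarrow x$ and $Y_i = \downarrow y_i$.

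Finally, for $(iv) \Rightarrow (i)$, set $f(I) :=$ the ideal of $P$ generated by $\{h(x) : x \in I\}$. Monotonicity is clear; for order-reflection, if $x \in I \setminus I'$ then $h(x) \in f(I)$, while supposing $h(x) \in f(I')$ produces $y_1, \ldots, y_n \in I'$ with $h(x) \leq h(y_1) \vee \cdots \vee h(y_n)$, contradicting (\ref{eq:notleq}) since $x \not\leq y_i$ for all $i$ (the $I'$ being an initial segment missing $x$). Preservation of arbitrary joins follows from the generating description: $f\bigl(\bigcup_\alpha I_\alpha\bigr)$ is generated by $\bigcup_\alpha \{h(x) : x \in I_\alpha\}$, which is already contained in $\bigvee_\alpha f(I_\alpha)$, and one can alternatively deduce it by applying Lemma \ref{lem:arbitraryjoins} to the finite-join-preserving map $I_{<\omega}(R) \to J(P)$ sending $X$ to the ideal generated by $\{h(x):x\in X\}$. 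The principal difficulty is $(ii) \Rightarrow (iv)$: the nonobvious step is choosing the separating initial segment $R \setminus \uparrow x$, after which $f(R \setminus \uparrow x)$ serves as the canonical witness absorbing every join $\bigvee_i h(y_i)$ for which $x \not\leq y_i$.
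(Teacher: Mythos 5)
Your proof is correct, but it reorganizes the equivalence differently from the paper. The paper proves the single cycle $(i)\Rightarrow(ii)\Rightarrow(iii)\Rightarrow(iv)\Rightarrow(i)$, and its main construction is $(ii)\Rightarrow(iii)$: for each $X\in I_{<\omega}(R)$ it picks $g(X)$ in $f(X)\setminus\bigcup\{f(R\setminus\uparrow a): a\in Max(X)\}$, and the non-emptiness of this set requires the extra observation that an ideal contained in a finite union of initial segments must be contained in one of them. You instead go straight from $(ii)$ to $(iv)$, which amounts to the special case $X=\downarrow x$: only the single separating segment $R\setminus\uparrow x$ is needed, so the directedness/finite-union argument disappears and the verification is shorter. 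The price is that you must then supply $(iv)\Rightarrow(iii)$, which the paper never needs (it reaches $(iii)$ directly from $(ii)$); your construction $g(X):=\bigvee\{h(x):x\in\max X\}$ together with the lift of a non-covered point to a maximal element of $X$ does this correctly, since $X=\downarrow\max X$ with $\max X$ finite for $X\in I_{<\omega}(R)$. Your steps $(iii)\Rightarrow(iv)$ (via $h(x):=g(\downarrow x)$) and $(iv)\Rightarrow(i)$ (the ideal generated by $\{h(x):x\in I\}$, injectivity from (\ref{eq:notleq}), join-preservation from the generating description or Lemma \ref{lem:arbitraryjoins}) coincide with the paper's. One shared caveat, not specific to you: both arguments implicitly adopt a convention for the degenerate instances with empty $Y_i$'s (equivalently, the empty finite subset $F$ in the injectivity step), i.e.\ that the empty join is the least element, so that (\ref{eq:notsup}) and (\ref{eq:notleq}) also exclude $g(X)$, resp.\ $h(x)$, from lying below it; with that convention both proofs are complete. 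Net effect: your route trades the paper's finite-union lemma for the extra implication $(iv)\Rightarrow(iii)$, which is arguably a slightly more elementary decomposition.
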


\begin{proof}\noindent $(\ref{item:0})\Rightarrow (\ref{item:1})$. Obvious.

 \noindent $(\ref{item:1})\Rightarrow (\ref{item:2})$  Let $f$ be an embedding from
$I(R)$ in $J(P)$. Let  $X \in I_{<\omega}(R)$. The set  $A:= Max(X)$ of maximal elements of $X$ is finite and $X:=\downarrow A$. Set $F(X):= \{f(R\setminus \uparrow a): a \in Max (X)\}$ and  $C(X):= f(X)\setminus \bigcup F(X)$.

\begin {claim} \label{claim:notempty}$C(X)\not =\emptyset$ for every $X \in I_{<\omega}(R)$\end{claim}
\noindent{\bf Proof of Claim \ref{claim:notempty}.}
 If $X= \emptyset$, $F(X)= \emptyset$. Thus  $C(X)=f(X)$ and our assertion is proved. We may then assume  $X\not = \emptyset$.  Suppose  $C(X)=\emptyset$, that is $f(X)\subseteq \bigcup F(X)$. Since $f(X)$ is an ideal of $P$ and  $\bigcup F(X)$ is a finite union of initial segments  of $P$, this implies that $f(X)$ is included in some, that is $f(X) \subseteq f(R\setminus \uparrow a)$ for some $a \in Max(X)$. Since $f$ is an embedding, this implies $X\subseteq  R\setminus \uparrow a$ hence $a\not \in X$. A contradiction.
\endproof

 Claim \ref{claim:notempty} allows us to pick an element $g(X)\in
C(X)$ for each $X \in I_{<\omega}(R)$. Let $g$ be the map defined by this process. We show that implication (\ref{eq:notsup}) holds. Let $X, Y_{1},\ldots,
Y_{n}\in I_{<\omega}(R)$. We have $g( Y_{i})\in f( Y_{i})$ for
every $1\leq i\leq n$. Since $f$ is an embedding
$f(Y_{i})\subseteq f( Y_{1}\cup \ldots \cup Y_{n})$ for every
$1\leq i\leq n$. But $f( Y_{1}\cup \ldots \cup Y_{n})$ is an
ideal. Hence $g(Y_{1})\vee\ldots\vee g(Y_{n})\in f( Y_{1} \cup
\ldots \cup Y_{n})$. Suppose $X\nsubseteq Y_{1}\cup \ldots \cup
Y_{n}$. There is $a\in Max(X)$ such that $Y_{1}\cup \ldots \cup
Y_{n}\subseteq R\setminus \uparrow a$.  And since $f$ is an embedding, $f(Y_{1}\cup \ldots \cup
Y_{n})\subseteq f(R\setminus \uparrow a)$. If $g(X)\leq g(Y_{1})\vee\ldots\vee g(Y_{n})$ then
$g(X)\in f( Y_{1}\cup \ldots\cup Y_{n})$. Hence $g(X)\in f(R\setminus \uparrow a)$,  contradicting
$g(X)\in C(X)$.

\noindent$(\ref{item:2})\Rightarrow (\ref{item:3})$.  Let $g: I_{<\omega}(R) \rightarrow P$ such that  implication (\ref{eq:notsup}) holds. Let  $h$ be the map induced by $g$ on  $R$ by setting $h(x):=g(\downarrow x)$ for $x\in R$. Let $x, y_{1}, \ldots,
y_{n}\in R$. If $x\not\leq y_{i}$ for every $1\leq i \leq n$, then
$\downarrow x \nsubseteq (\downarrow y_{1} \cup \ldots \cup
\downarrow y_{n})$. Since $g$ satisfies implication (\ref{eq:notsup}),
we have $h(x):=g(\downarrow x)\not\leq g(\downarrow y_{1})\vee
\ldots \vee g(\downarrow y_{n})=h(y_{1})\vee\ldots\vee
h(y_{n})$. Hence  implication (\ref{eq:notleq}) holds.

\noindent$(\ref{item:3})\Rightarrow (\ref{item:0})$ Let $h: R\rightarrow P$
such that implication (\ref{eq:notleq}) holds. Define $f: I(R)\rightarrow
J(P)$ by setting $f(I):=<\{h(x): x\in I\}>$, the ideal generated by
$\{h(x): x\in I\}$, for $I\in I(R)$. Since in $I(R)$ the join is the union,    $f$ preserves arbitrary joins.  We claim that $f$ is one-to-one.
Let $I, J\in I(R)$ such that $I\nsubseteq J$. Let $x\in
I\setminus  J$. Clearly $h(x)\in f(I)$. We claim that
$h(x)\not\in f(J)$. Indeed, if $h(x)\in f(J)$, then
$h(x)\leq\bigvee\{h(y): y\in F\}$ for some finite subset $F$ of
$J$. Since implication (\ref{eq:notleq}) holds, we have $x\leq
y$ for some $y\in F$. Hence $x\in J$, contradiction. Consequently
$f(I)\nsubseteq f(J)$. Thus  $f$ is one-to-one as claimed.
\end{proof}

The following proposition rassembles the main properties of the comparizon of join-semilattices.
\begin{proposition} \label{1.3} Let  $P$, $Q$ be two join-semilattices with a least element. Then:
\begin{enumerate}
\item \label{item 1lem1.3}$Q$ is embeddable in $P$ by a join-preserving map  iff $Q$ is embeddable in  $P$ by a map preserving finite joins.
\item \label{item 2lem1.3}If $Q$ is embeddable  in  $P$ by a join-preserving map then $J (Q)$ is embeddable  in  $J (P )$
by a map preserving arbitrary joins.

Suppose $Q := I_{<\omega} (R)$ for some poset $R$. Then:
\item  \label{item 3lem1.3}$Q$ is embeddable in $P$ as a poset iff $Q$ is embeddable in   $P$ by a map preserving finite joins.
\item \label{item 4lem1.3}$J (Q)$ is embeddable in $J (P )$ as a poset iff $J (Q)$  is embeddable in   $J(P)$ by a map preserving arbitrary  joins.
\item  \label{item 5lem1.3}If $\downarrow  x$ is finite for every $x \in  R$ then $Q$ is embeddable  in $P$ as a poset iff $J (Q)$
is embeddable in  $J (P )$ as a poset.
\end{enumerate}
\end{proposition}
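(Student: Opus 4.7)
The proposition splits into five parts that I would tackle in order, each building on the previous via the two preceding lemmas. For part (1), the only real issue in going from ``join-preserving'' to ``preserves finite joins'' is the empty join: since $f(0_Q)$ lies below every $f(x)$, I can redefine the image of $0_Q$ to be $0_P$ without destroying injectivity or order-preservation, thereby also making the empty join respected. Part (2) then follows by setting $g(q):=\downarrow f(q)$ for the upgraded $f$ from (1); this map $g\colon Q\to J(P)$ preserves finite joins (principal ideals compose correctly under joins), so Lemma \ref{lem:arbitraryjoins} delivers an extension $\widetilde g\colon J(Q)\to J(P)$ preserving arbitrary joins. Injectivity follows because for $I\not\subseteq I'$ and $x\in I\setminus I'$ we have $f(x)\in\widetilde g(I)$, and $f(x)\in\widetilde g(I')=\langle f(I')\rangle$ would force $f(x)\leq f(\bigvee F)$ for some finite $F\subseteq I'$, hence $x\leq\bigvee F\in I'$ (as ideals are closed under finite joins), a contradiction.

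Part (3) is where I expect the real work. Given only a poset embedding $e\colon I_{<\omega}(R)\to P$, I would define $\iota(x):=e(\downarrow x)$ and set $\phi(X):=\bigvee_{x\in\max X}\iota(x)$; the monotonicity of $\iota$ makes $\phi$ order-preserving and finite-join-preserving (non-maximal elements of $\max X\cup\max Y$ are absorbed by elements of $\max(X\cup Y)$ dominating them). The delicate verification is order-reflection. My plan is to exploit the bound $\phi(Y)=\bigvee_{b\in\max Y}e(\downarrow b)\leq e(Y)$, which holds purely because $e$ is order-preserving, even if $e$ is not join-preserving. If $X\not\subseteq Y$ then some $a\in\max X$ lies outside $Y$ (any $x\in X\setminus Y$ is bounded by some maximal $a$ of $X$, and $a\in Y$ would pull $x$ into $Y$), and the chain $e(\downarrow a)=\iota(a)\leq\phi(Y)\leq e(Y)$ together with the embedding property of $e$ gives $\downarrow a\subseteq Y$, i.e.\ $a\in Y$, a contradiction.

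Part (4) is immediate from $J(I_{<\omega}(R))\cong I(R)$ and the equivalence (i)$\Leftrightarrow$(ii) in Lemma \ref{lem:11}. For part (5), the forward direction chains (3) and (2): pass from a poset embedding $Q\hookrightarrow P$ to a finite-join-preserving one, then to an arbitrary-join-preserving embedding $J(Q)\hookrightarrow J(P)$. The converse uses the finiteness of each $\downarrow x$ in a crucial way: it makes every $X\in I_{<\omega}(R)$ finite, so that after extracting $h\colon R\to P$ satisfying implication (\ref{eq:notleq}) from Lemma \ref{lem:11}, I can define $\phi(X):=\bigvee_{x\in X}h(x)$ as a bona fide finite join and read off order-reflection by applying (\ref{eq:notleq}) to the entire finite set $Y$ (rather than just $\max Y$, which would require $h$ to be monotone---something Lemma \ref{lem:11} does not guarantee).
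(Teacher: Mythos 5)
Your proposal is correct and follows essentially the same route as the paper: redefining the image of $0$ for (1), extending via down-sets/generated ideals for (2), the join of $e(\downarrow x)$ over (maximal) elements for (3), citing Lemma \ref{lem:11} together with $J(I_{<\omega}(R))\cong I(R)$ for (4), and chaining (3) with (2) plus the map extracted from Lemma \ref{lem:11} (made principal by finiteness of each $\downarrow x$) for (5). Your added verifications (injectivity in (2), order-reflection via $\phi(Y)\leq e(Y)$ in (3)) simply spell out details the paper leaves implicit.
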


\begin{proof}
\begin{enumerate}[{}]
\item (\ref{item 1lem1.3}) Let $f : Q \rightarrow  P$ satisfying $f (x \vee y) = f (x)
\vee f (y)$ for all $x, y \in  Q$. Set $g(x) := f (x)$  if
$x\not=0$ and $g(0) := 0$. Then g preserves finite joins.

\item (\ref{item 2lem1.3}) Let $f : Q \rightarrow  P$ and $\overline f : J (Q)
\rightarrow J (P )$ defined by $f (I ) :=\downarrow \{f (x) : x
\in I \}$. If $f$ preserves finite joins, then $\overline f$
preserves arbitrary joins. Furthermore, $\overline f$ is one to one provided that $f$ is one-to-one.

\item (\ref{item 3lem1.3}) Let $ f : Q \rightarrow P$ . Taking account that $Q :=
I_{<\omega} (R)$, set $g(\emptyset) := 0$ and $g(I ):= \bigvee \{f
(\downarrow x) : x \in I \}$ for each $I \in   I_{<\omega}
(R)\setminus \{\emptyset\}$. Since in $Q$ the join is the union, the map $g$ preserves finite unions.

\item (\ref{item 4lem1.3}) This is equivalence $(\ref{item:0})\Longleftrightarrow (\ref{item:3})$ of Lemma \ref{lem:11}.
\item (\ref{item 5lem1.3}) If $Q$ is embeddable in  $P$ as a poset, then from  Item (\ref{item 3lem1.3}), $Q$ is embeddable in
 $P$ by a map preserving finite joins. Hence from Item $(\ref{item 1lem1.3})$, $J(Q)$
is embeddable  in $J(P)$ by a map preserving arbitrary joins. Conversely, suppose that $J(Q)$ is embeddable in $J(P)$ as
a poset. Since $J(Q)$ is isomorphic to  $I(R)$, Lemma \ref{lem:11} implies that
there is map $h$ from $R$ in $P$ such that implication
$(\ref{eq:notleq})$ holds.  According to the proof of Lemma \ref{lem:11}, the map $f: I(R) \rightarrow J(P)$ defined by setting
$f(I):=\bigvee\{h(x): x\in I\}$ is an embedding preserving arbitrary joins. Since $\downarrow x$ is finite for every $x\in R$,  $I$ is  finite , hence $f(I)$ has a largest element, for   every $I\in I_{<\omega}(R)$. Thus $f$ induces an embedding from $Q$ in $P$ preserving finite joins. \end{enumerate}
\end{proof}

\begin{theorem} \label{1.4} Let $R$ be a poset, $Q := I_{<\omega} (R)$ and $\kappa:=\vert Q\vert$. Then, there is a set $\B$, of size
at most $2^\kappa$, made of join-semilattices, such that for every join-semilattice
$P$, the join-semilattice
$J (Q)$ is not embeddable  in $J (P )$ by a map preserving arbitrary joins if and only
if no member $Q_f$ of $\B$ is embeddable in $P$ as a join-semilattice.
\end{theorem}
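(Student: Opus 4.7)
The plan is to reduce the statement to Lemma~\ref{lem:11}, whose equivalence $(\ref{item:0})\Leftrightarrow (\ref{item:2})$ says that $J(Q)\cong I(R)$ embeds in $J(P)$ preserving arbitrary joins if and only if there is a map $g:Q\rightarrow P$ verifying implication $(\ref{eq:notsup})$. The set $\B$ will parametrize the combinatorial types of such maps via canonical quotient join-semilattices.

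Let $F:=[Q]^{<\omega}$, viewed as the free join-semilattice on $Q$ (union as join, $\emptyset$ as least element). Any $g:Q\rightarrow P$ extends uniquely to a join-preserving map $\bar g:F\rightarrow P$ with $\bar g(A):=\bigvee\{g(X):X\in A\}$, and induces on $F$ the preorder $\preceq_g$ given by $A\preceq_g B \Leftrightarrow \bar g(A)\leq \bar g(B)$. Let $\mathcal P$ be the set of all preorders $\preceq$ on $F$ that extend inclusion, are compatible with union (in the sense that $A\preceq A'$ and $B\preceq B'$ imply $A\cup B\preceq A'\cup B'$), and satisfy the obstruction condition $\{X\}\not\preceq\{Y_1,\ldots,Y_n\}$ whenever $X\nsubseteq Y_1\cup\cdots\cup Y_n$ in $Q$. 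For each such $\preceq$ define $Q_\preceq:=F/\!\sim_\preceq$, where $A\sim_\preceq B$ iff $A\preceq B$ and $B\preceq A$; compatibility with union makes $\sim_\preceq$ a semilattice congruence, so $Q_\preceq$ is a join-semilattice with least element $[\emptyset]$. Set $\B:=\{Q_\preceq:\preceq\in\mathcal P\}$.

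The two directions of the required equivalence then follow formally. If some $\iota:Q_\preceq\hookrightarrow P$ is a join-semilattice embedding, the map $g(X):=\iota([\{X\}])$ inherits $(\ref{eq:notsup})$ from the obstruction clause on $\preceq$, and Lemma~\ref{lem:11} supplies the desired embedding $J(Q)\hookrightarrow J(P)$ preserving arbitrary joins. Conversely, if such a $g$ exists then $\preceq_g\in\mathcal P$ and $[A]\mapsto\bar g(A)$ descends to a join-semilattice embedding $Q_{\preceq_g}\hookrightarrow P$. For the size estimate, in the infinite case $|F|=\kappa$, so there are at most $2^{\kappa\cdot\kappa}=2^\kappa$ binary relations on $F$, whence $|\mathcal P|\leq 2^\kappa$ and $|\B|\leq 2^\kappa$ (the finite case being immediate). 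The principal obstacle is the bookkeeping around the quotient construction, namely confirming that $\sim_\preceq$ is genuinely a semilattice congruence (so that $Q_\preceq$ is a join-semilattice with $[\emptyset]$ minimal), and that $[A]\mapsto\bar g(A)$ is simultaneously injective (by definition of $\sim_g$), order-reflecting, and join-preserving, hence truly an embedding of join-semilattices.
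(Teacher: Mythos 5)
Your argument is correct, and it turns on the same pivot as the paper's proof: the equivalence in Lemma \ref{lem:11} between an arbitrary-joins-preserving embedding of $J(Q)\cong I(R)$ into $J(P)$ and the existence of a map $g:Q\rightarrow P$ satisfying implication (\ref{eq:notsup}). Where you genuinely differ is in how $\B$ is built and counted. The paper proceeds a posteriori: for each $P$ and each embedding $f$ it takes $Q_f$ to be the join-subsemilattice of $P$ generated by $g[Q]$, checks (Claim \ref{claim:counting}) that (\ref{eq:notsup}) survives when $P$ is replaced by $Q_f$, so that $J(Q)$ still embeds in $J(Q_f)$ preserving arbitrary joins, observes $\vert Q_f\vert=\vert Q\vert=\kappa$ when $R$ is infinite, and bounds $\vert\B\vert$ by the number of join-semilattice structures on a fixed set of size $\kappa$; the case where every $\downarrow x$ is finite is disposed of separately by $\B=\{Q\}$ via Proposition \ref{1.3}. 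You proceed a priori: $\B$ consists of quotients of the free join-semilattice $[Q]^{<\omega}$ by admissible preorders, with the obstruction clause wired into the definition, so the forward direction is automatic for every member of $\B$ (even those realized by no actual embedding, which is harmless), and the bound comes from counting preorders. Mathematically the witnesses are the same objects, since the subsemilattice generated by $g[Q]$ is the image of $\overline g$ and hence isomorphic to your $Q_{\preceq_g}$; what your packaging buys is a canonical, $P$-independent description of $\B$ that dispenses with the transfer claim and the cardinality computation for $Q_f$, while what it costs is that for finite $\kappa$ the preorder count on $[Q]^{<\omega}$ no longer gives $2^{\kappa}$, so your parenthetical "the finite case being immediate" should be made explicit: there $R$ is finite, every $\downarrow x$ is finite, and items (\ref{item 3lem1.3}), (\ref{item 4lem1.3}), (\ref{item 5lem1.3}) of Proposition \ref{1.3} give $\B=\{Q\}$, exactly as in the paper.
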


\begin{proof}
If   $\downarrow x$ is finite for every $x\in R$ the conclusion of the theorem holds with $\mathbb B= \{ Q\}$ (apply Item (\ref{item 3lem1.3}), (\ref{item 4lem1.3}), (\ref {item 5lem1.3}) of Proposition \ref{1.3}). So we may assume that $R$ is infinite. Let $P$ be a join-semilattice. Suppose that there is an embedding $f$ from $J (Q)$ in $J (P )$ which  preserves arbitrary
joins.
\begin{claim}\label{claim:counting}There is a join-semilattice $Q_f$ such that
\begin{enumerate}
\item $Q_f$
embeds in $P$ as a join-semilattice.
\item $J (Q)$
embeds in $J (Q_f )$ by a map preserving arbitrary joins.
\item  $\vert Q_f \vert =
\vert Q\vert$.
\end{enumerate}
\end{claim}
\noindent{\bf Proof of Claim \ref{claim:counting}.}
 From Lemma \ref{lem:11}, there is a map $g : Q
\rightarrow P$  such that inequality   $(\ref{eq:notsup})$ holds. Let $Q_f $
be the join-semilattice of $P$ generated by $\{g(x) : x \in Q\}$. This inequality holds when $P$ is replaced by $Q_f$. Thus from Proposition  \ref{1.3}, $J (Q)$ embeds into $J (Q_f )$ by a map
preserving arbitrary joins. Since $R$ is infinite,  $\vert Q_f \vert =
\vert Q\vert=\vert R\vert$.
\endproof

For each join-semilattice $P$ and  each embedding $f:J (Q)\rightarrow J (P )$ select $Q_f$, given by Claim \ref{claim:counting}, on a fixed set of size $\kappa$.  Let $\B$ be the collection of this join-semilattices. Since the number of join-semilattices on a set of size $\kappa$ is at most $2^{\kappa}$, $\vert\mathbb B\vert \leq 2^\kappa$.
\end{proof}

\noindent{\bf Proof of Theorem \ref{thmfirst}.}
Let $\alpha$ be the order type of a chain $C$. Apply Theorem \ref{1.4}
 above with $R:=C$. Since,  in
 this case,  $J(Q)$ is embeddable  in $J(P)$ if and only if $J(Q)$ is embeddable in $J(P)$ by a map preserving arbitrary joins, Theorem  \ref{thmfirst}  follows. \endproof

\section [Sierpinskisations]{Sierpinskisations and a proof of Theorem \ref{ordinal} }
\subsection{Proof of item (i) of Theorem \ref{ordinal}}
We start with the following lemma
\begin{lemma}\label{lem:finitegenesierp} If $\alpha$ is a countably infinite order type and $S$ is a sierpinskisation of $\alpha$ and $\omega$ then the join-semilattice $I_{<\omega}(S)$, made of finitely generated initial segments of $S$, is isomorphic to a  join-subsemilattice of $[\omega]^{<\omega}$ and  belongs to $\J_{\alpha}$.\end{lemma}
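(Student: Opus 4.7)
The plan is to realize $S$ concretely as $\N$ equipped with two linear orders: the natural one $\leq_{\omega}$ and a second one $\leq_{\alpha}$ of type $\alpha$, with $\leq_S$ defined as their intersection (such a presentation is furnished by any bijection $\varphi:\omega\to\alpha$, as recalled just before the lemma). The key observation driving the whole argument is that $x\leq_S y$ forces $x\leq_{\omega}y$, so $\downarrow_S y\subseteq\{0,1,\dots,y\}$ is finite. Consequently every finitely generated initial segment of $S$ is a finite subset of $\N$, and we obtain a canonical set-theoretic inclusion $\iota:I_{<\omega}(S)\hookrightarrow [\omega]^{<\omega}$. To see that $\iota$ is a join-embedding, it suffices to observe that in both semilattices the join is just set-theoretic union --- in $I_{<\omega}(S)$ because $\downarrow F_1\cup\downarrow F_2=\downarrow(F_1\cup F_2)$ and the right-hand side is finitely generated, and in $[\omega]^{<\omega}$ by definition --- and that $\emptyset$ is the least element of both sides.

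For the second half I would pass through the standard isomorphism $J(I_{<\omega}(S))\cong I(S)$, realized by $L\mapsto\{F\in I_{<\omega}(S):F\subseteq L\}$ with inverse $\mathcal{J}\mapsto\bigcup\mathcal{J}$, the up-directedness of an ideal $\mathcal{J}$ matching the fact that a union of two finite initial segments of $S$ is again a finite initial segment. Given this, it is enough to exhibit a chain of type $I(\alpha)$ inside $I(S)$. For this I would use $\varphi$ to pull back initial segments: to each $J'\in I(\alpha)$ assign $\varphi^{-1}(J')\subseteq\N$. The verification that $\varphi^{-1}(J')$ is an initial segment of $S$ is immediate from $\leq_S\,\subseteq\,\leq_{\alpha}$, and $J'\mapsto\varphi^{-1}(J')$ is clearly an order-embedding of the chain $I(\alpha)$ into the chain $I(S)$. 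Composing with the identification $I(S)\cong J(I_{<\omega}(S))$ yields a chain of type $I(\alpha)$ inside $J(I_{<\omega}(S))$, which is exactly the statement that $I_{<\omega}(S)\in\J_{\alpha}$.

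The main obstacle, modest as it is, sits entirely in the first half: the finiteness of $\downarrow_S y$ is the one place where the sierpinskisation hypothesis is genuinely exploited, and everything else (join-preservation of $\iota$, the identification $J(I_{<\omega}(S))\cong I(S)$, the pullback along $\varphi$) is bookkeeping about ideals and initial segments. I would take the opportunity to state the isomorphism $J(I_{<\omega}(R))\cong I(R)$ explicitly, since it is used repeatedly in the sequel.
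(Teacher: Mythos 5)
Your proposal is correct and follows essentially the paper's route: the $\omega$-linear extension makes every finitely generated initial segment of $S$ finite, so $I_{<\omega}(S)$ sits inside $[\omega]^{<\omega}$ with join equal to union, and the standard isomorphism $J(I_{<\omega}(S))\cong I(S)$ reduces membership in $\J_{\alpha}$ to exhibiting a chain of type $I(\alpha)$ in $I(S)$. The only difference is cosmetic: where the paper invokes the result of Bonnet--Pouzet that $I(\overline S)$ is a (maximal) chain of $I(S)$ of type $I(\alpha)$ for the linear extension $\overline S$ of type $\alpha$, you verify directly that the $\varphi$-pullbacks of initial segments of $\alpha$ form such a chain, which is all that is needed since maximality plays no role here.
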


\begin{proof}By definition,  the order on a sierpinskisation $S$ of $\alpha$  and $\omega$ has a
linear extension such that the resulting chain $\overline S$ has
order type $\alpha$. Hence, from a result of \cite{bonn-pouz1}, the chain $I(\overline S)$ is a maximal chain
of $I(S)$ of type $I(\alpha)$. The lattices $I(S)$ and
$J(I_{<\omega}(S))$ are isomorphic, thus $I_{<\omega}(S)\in
\J_{\alpha}$. The order on $S$ has  a linear extension of  type
$\omega$, thus  every principal initial segment of $S$ is finite and
more generally every finitely generated initial segment of $S$ is
finite. This tells us that $I_{<\omega}(S)$ is a join-subsemilattice
of $[S]^{<\omega}$. Since $S$  is countable, $I_{<\omega}(S)$
identifies to a join-subsemilattice of $[\omega]^{<\omega}$.
\end{proof}

The proof of the "only if" part goes as follows.  Let $S$ be a sierpinskization of $\alpha$ and $\omega$.  According to Lemma \ref{lem:finitegenesierp}, the join-semilattice $I_{<\omega}(S)$, made of finitely generated initial segments of $S$, is isomorphic to a  join-subsemilattice of $[\omega]^{<\omega}$ and  belongs to $\J_{\alpha}$. If  $[\omega]^{<\omega}$ is a minimal member of $\mathbb{J}_{\alpha}$, then $[\omega]^{<\omega}$ is   embeddedable as a join-subsemilattice in  $I_{<\omega}(S)$. To conclude that $\alpha$ cannot be an ordinal, it suffices to prove: 

\begin{lemma} \label{lem:wqo}If $\alpha$ is  an ordinal and $S$ is a sierpinskisation of $\alpha$ and $\omega$, then  $[\omega]^{<\omega}$ is not embeddable in  $I_{<\omega}(S)$.
\end{lemma}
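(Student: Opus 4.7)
My plan is to prove the stronger statement that $(I_{<\omega}(S),\subseteq)$ has no infinite antichain. Since the singletons $\{\{n\}:n<\omega\}$ form an infinite antichain in $[\omega]^{<\omega}$ and every poset embedding preserves antichains, this immediately yields Lemma~\ref{lem:wqo}.

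The key intermediate fact is that $S$ itself is well-quasi-ordered (WQO). Fix a bijection $\varphi:\omega\to\alpha$ realising the sierpinskisation, so that $x\leq_S y$ iff $x\leq y$ in $\omega$ and $\varphi(x)\leq\varphi(y)$ in $\alpha$. If $A\subseteq S$ is an infinite antichain enumerated in $\leq_\omega$-order as $a_0<_\omega a_1<_\omega\cdots$, the pairwise $\leq_S$-incomparability forces $\varphi(a_0)>_\alpha\varphi(a_1)>_\alpha\cdots$, contradicting the well-foundedness of the ordinal $\alpha$; and $\leq_S$, being the intersection of the two well-founded linear orders $\leq_\omega$ and $\leq_\alpha$, is itself well-founded. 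Hence $S$ is WQO, and every $I\in I_{<\omega}(S)$ coincides with $\downarrow_S\mathrm{Max}(I)$ for a finite antichain $\mathrm{Max}(I)\subseteq S$.

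The map $I\mapsto\mathrm{Max}(I)$ identifies $(I_{<\omega}(S),\subseteq)$ with the set of finite antichains of $S$ under the domination order $M\leq_{\mathrm{dom}}M'$ iff every $x\in M$ is $\leq_S$-below some $y\in M'$, and this in turn embeds into $([S]^{<\omega},\leq_{\mathrm{dom}})$. Applying Higman's lemma in its standard form for domination on finite subsets of a WQO, the WQO-ness of $S$ propagates to $([S]^{<\omega},\leq_{\mathrm{dom}})$; this poset, and hence $I_{<\omega}(S)$, has no infinite antichain. The only place where the hypothesis that $\alpha$ is an ordinal enters is in establishing the WQO-ness of $S$, which is where I expect the main subtlety to lie; the remainder is a routine application of classical WQO theory.
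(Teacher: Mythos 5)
Your proof is correct, and it shares with the paper the same key fact --- that a sierpinskisation $S$ of an ordinal $\alpha$ and $\omega$ is well-quasi-ordered --- but the transfer to $I_{<\omega}(S)$ is done differently. The paper observes that $S$ embeds in the product $\omega\times\alpha$ of two w.q.o.\ posets (your direct argument with the descending sequence $\varphi(a_0)>_\alpha\varphi(a_1)>_\alpha\cdots$ is just an unwinding of this), and then invokes Higman's equivalence ``$S$ w.q.o.\ iff $I(S)$ well-founded'': a poset embedding of $[\omega]^{<\omega}$ into $I_{<\omega}(S)$ would, via Proposition \ref{1.3}(\ref{item 5lem1.3}), lift to an embedding of $J([\omega]^{<\omega})=\mathfrak P(\omega)$ into $J(I_{<\omega}(S))\cong I(S)$, contradicting well-foundedness since $\mathfrak P(\omega)$ contains a chain of type $\omega^{*}$. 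You instead stay at the level of finitely generated initial segments: you identify $I_{<\omega}(S)$ with the finite antichains of $S$ under domination, apply the finite-subsets form of Higman's lemma to conclude that $I_{<\omega}(S)$ has no infinite antichain, and then use the trivial fact that order-embeddings preserve antichains (the singletons being an infinite antichain of $[\omega]^{<\omega}$). What each buys: your route avoids the lifting machinery of Proposition \ref{1.3} and the passage to the ideal lattices altogether, at the cost of invoking the (standard but slightly heavier) Higman theorem for finite subsets of a w.q.o.; the paper's route needs only the bare equivalence between w.q.o.\ and well-foundedness of the ideal lattice, and fits the $J(\cdot)$-level viewpoint used throughout the rest of the paper. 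Your claimed stronger conclusion (no infinite antichain in $I_{<\omega}(S)$) is also consistent with Theorem \ref{thm:posetchap2}, which yields the same dichotomy for join-subsemilattices of $[\omega]^{<\omega}$.
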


This simple fact relies on the important notion of
well-quasi-ordering introduced by Higman \cite{higm}. We recall that
a poset $P$ is \emph{well-quasi-ordered} (briefly w.q.o.) if every
non-empty subset $A$ of $P$ has at least a minimal element and the
number of these minimal elements is finite. As shown by Higman, this
is equivalent to the fact that $I(P)$ is well-founded \cite{higm}.

 Well-ordered set are trivially w.q.o. and, as it is well known, the direct product of finitely many w.q.o. is w.q.o.  Lemma \ref{lem:wqo} follows immediately from this. Indeed, if   $S$ is a sierpinskisation of $\alpha$ and $\omega$, it embeds in the direct product $\omega\times \alpha$.  Thus $S$  is w.q.o. and consequently  $I(S)$  is well-founded. This implies that $[\omega]^{<\omega}$ is not  embeddable in  $I_{<\omega}(S)$. Otherwise  $J([\omega]^{<\omega})$ would be embeddable in $J(I_{<\omega}(S))$, that is  $\mathfrak P(\omega)$ would be embeddable in $I(S)$. Since $\mathfrak P(\omega)$ is not well-founded, this would contradict the well-foundedness of $I(S)$.

The "if" part is based on our earlier work on well-founded algebraic lattices, and essentially on the following corollary of Theorem \ref{thm4chap2}.

 \begin{theorem} \label{thm:posetchap2}(Corollary 1.1, \cite{chapou2})  A join-subsemilattice $P$ of $[\omega]^{<\omega}$ contains either  $[\omega]^{<\omega}$ as a join-semilattice or is well-quasi-ordered. In the latter case,   $J(P)$ is well-founded.
\end{theorem}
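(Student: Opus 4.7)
The plan is to run a dichotomy on $P$ based on well-quasi-ordering. First, $[\omega]^{<\omega}$ is well-founded, since the cardinality strictly decreases along any descending chain of finite sets, and so every subposet $P$ is well-founded. In a well-founded poset, failure of wqo is equivalent to the existence of an infinite antichain, and by the Higman equivalence already recalled in the paper, $P$ is wqo iff $J(P)$ is well-founded. So the statement reduces to a single implication: if $P$ contains an infinite antichain, then $[\omega]^{<\omega}$ embeds as a join-subsemilattice in $P$; the clause ``in the latter case, $J(P)$ is well-founded'' is then just Higman's theorem applied to $P$.

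To realize this embedding, let $\{A_n : n < \omega\}$ be an infinite antichain in $P$. Since the $A_n$ are finite subsets of $\omega$, the $\Delta$-system (sunflower) lemma produces an infinite subsequence $\{B_k : k < \omega\}$ forming a sunflower with kernel $D$: one has $B_k \cap B_{k'} = D$ for all $k \neq k'$, and the petals $B_k \setminus D$ are pairwise disjoint. Each petal is nonempty, for otherwise $B_k = D \subseteq B_{k'}$ would violate the antichain hypothesis. Define $\varphi : [\omega]^{<\omega} \to P$ by
\[
\varphi(F) \;:=\; \bigvee_{k \in F} B_k \;=\; D \cup \bigcup_{k \in F}(B_k \setminus D).
\]
As joins in $[\omega]^{<\omega}$ are unions and $P$ is a join-subsemilattice of it, $\varphi$ takes values in $P$ and obviously preserves finite joins. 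The disjointness and nonemptiness of the petals make $F \mapsto \bigcup_{k \in F}(B_k \setminus D)$, and therefore $\varphi$ itself, an order-embedding: any $k \in F \setminus G$ produces a petal element witnessing $\varphi(F) \not\subseteq \varphi(G)$. Thus $\varphi$ realizes $[\omega]^{<\omega}$ as a join-subsemilattice of $P$.

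The main technical ingredient is the $\Delta$-system lemma, which for countable families of finite sets is a routine iterated pigeonhole after stabilizing the size of the $B_k$. The only subtle point is to ensure that the petals are nonempty, and this is precisely where the antichain hypothesis enters; without it, the sunflower could collapse to a constant family equal to its kernel and $\varphi$ would fail to be injective. Everything else --- the well-foundedness of $[\omega]^{<\omega}$, the antichain/wqo dichotomy for well-founded posets, and Higman's equivalence between wqo of $P$ and well-foundedness of $J(P)$ --- is either elementary or has already been invoked in the preceding pages.
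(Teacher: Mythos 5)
Your reductions at the start are fine (every subposet of $[\omega]^{<\omega}$ is well-founded, wqo $=$ well-founded plus no infinite antichain, and Higman gives that $J(P)$ is well-founded when $P$ is wqo), but the core step fails: the $\Delta$-system lemma does \emph{not} hold for countably infinite families of finite sets of unbounded size, and ``stabilizing the size of the $B_k$'' is impossible when the sizes are all distinct -- which is exactly what can happen in an infinite antichain. Concretely, let $A_n:=\{0,2,4,\dots,2(n-1)\}\cup\{2n+1\}$ for $n\geq 1$. This is an infinite antichain in $[\omega]^{<\omega}$, yet for $n_1<n_2<n_3$ the pairwise intersections are $\{0,\dots,2(n_1-1)\}$, $\{0,\dots,2(n_1-1)\}$, $\{0,\dots,2(n_2-1)\}$, so the family contains no sunflower with even three petals, let alone an infinite one. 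Worse, the join-subsemilattice $P$ generated by this antichain (its closure under finite unions) contains no infinite sunflower at all: any infinite subset of $P$ has members whose even parts $\{0,2,\dots,2(m-1)\}$ occur for unboundedly many $m$, and the even part of a pairwise intersection is determined by the smaller of the two values of $m$, so the pairwise intersections cannot all coincide. Hence your embedding scheme (kernel plus disjoint petals) cannot even be started in this $P$, although the theorem's conclusion does hold for it: each $A_n$ owns the private point $2n+1$, so $F\mapsto\bigcup_{n\in F}A_n$ is the required embedding.

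What this shows is that the property you actually need from the antichain is not the sunflower structure but \emph{independence}: an infinite family $\{B_k\}$ with each $B_k\not\subseteq\bigcup_{j\in F}B_j$ for every finite $F\not\ni k$ (equivalently, each $B_k$ has a point outside all the others). Proving that a join-subsemilattice of $[\omega]^{<\omega}$ with an infinite antichain contains such an independent family -- and hence a copy of $[\omega]^{<\omega}$ -- is precisely the nontrivial content of the quoted result (Corollary 1.1 of \cite{chapou2}, resting on Theorem \ref{thm4chap2}); the present paper cites it rather than proving it, and your argument does not supply a substitute. A smaller point: your $\varphi$ is not defined at $F=\emptyset$ (the kernel $D$ need not lie in $P$, and $P$ need not have a least element); this is easily repaired, e.g.\ by sending $F$ to $B_0\cup\bigcup_{k\in F}B_{k+1}$ or by restricting to the sets containing $0$, but the sunflower step itself is the genuine gap.
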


With this result, the proof of the "if" part of Theorem \ref{ordinal} is immediate.  Indeed, suppose  that $\alpha$ is not an ordinal.
Let $P\in\mathbb{J}_{\alpha}$. The lattice $J(P)$ contains a chain
isomorphic to $I(\alpha)$. Since $\alpha$ is not an ordinal,
$\omega^{*}\leq\alpha$. Hence, $J(P)$ is not well-founded. If $P$ is embeddable  in $[\omega]^{<\omega}$ as a
join-semilattice then, from
Theorem \ref{thm:posetchap2},  $P$ contains a join-subsemilattice
isomorphic to $[\omega]^{<\omega}$. Thus  $[\omega]^{<\omega}$ is
minimal in $\mathbb{J}_{\alpha}$.

A sierpinskisation $S$ of a countable order type $\alpha$ and $\omega$ is embeddable into $[\omega]^{<\omega}$ as a poset.  A consequence of Theorem \ref{thm:posetchap2} is the following
\begin{corollary}\label{lem:sierpnotinw} If  $S$  can be  embedded in $[\omega]^{<\omega}$ as a join-semilattice, $\alpha$ must be an ordinal.
\end{corollary}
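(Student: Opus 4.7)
My plan is to apply Theorem \ref{thm:posetchap2} to the image of the assumed embedding and dispatch each of its two alternatives separately, using that any sierpinskisation of $\alpha$ and $\omega$ sits as a subposet of the product $\omega\times\alpha$ via the two linear extensions of its order. Let $h:S\to [\omega]^{<\omega}$ be the given join-semilattice embedding and set $P:=h(S)$. Then $P$ is a join-subsemilattice of $[\omega]^{<\omega}$, so by Theorem \ref{thm:posetchap2} either (a) $P$ contains a copy of $[\omega]^{<\omega}$ as a join-subsemilattice, or (b) $P$ is well-quasi-ordered.

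The substantive step will be to rule out alternative (a). Composing with the poset embedding $S\hookrightarrow\omega\times\alpha$, alternative (a) would produce a poset embedding $f:[\omega]^{<\omega}\to\omega\times\alpha$, and I plan to block this by a short pigeonhole argument. Writing $f(\{n\})=(a_n,b_n)$, the antichain $\{\{n\}:n<\omega\}$ forces the first coordinates $a_n$ to be pairwise distinct; so, extracting a subsequence with $a_{n_0}<a_{n_1}<a_{n_2}$, incomparability in $\omega\times\alpha$ then forces $b_{n_0}>b_{n_1}>b_{n_2}$. But $f(\{n_0,n_2\})\geq f(\{n_0\})\vee f(\{n_2\})=(a_{n_2},b_{n_0})>(a_{n_1},b_{n_1})=f(\{n_1\})$, contradicting the incomparability of $\{n_0,n_2\}$ and $\{n_1\}$ in $[\omega]^{<\omega}$.

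In the remaining alternative (b), $S\cong P$ is w.q.o. Since $S$ admits a linear extension of type $\omega$ it is automatically well-founded, so the w.q.o. property reduces to the absence of an infinite antichain. Read inside $\omega\times\alpha$, such an antichain is a sequence strictly increasing in the $\omega$-coordinate whose $\alpha$-coordinates strictly decrease, i.e.\ an $\omega^{*}$-subchain of $\alpha$; its absence forces the chain $\alpha$ to be well-ordered, hence an ordinal. The main obstacle is the pigeonhole argument of the second paragraph ruling out (a); everything else is straightforward bookkeeping combining the definition of a sierpinskisation with Theorem \ref{thm:posetchap2}.
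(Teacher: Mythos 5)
Your proof is correct, and its skeleton coincides with the paper's: both apply Theorem \ref{thm:posetchap2} to the image of $S$ in $[\omega]^{<\omega}$ and both exploit the fact that a sierpinskisation embeds in the product $\omega\times\alpha$ of two chains while $[\omega]^{<\omega}$ does not. Where you genuinely diverge is in how that last nonembeddability is established: the paper cites Dushnik--Miller dimension theory (every $\mathfrak P(\{0,\dots,n-1\})$ sits inside $[\omega]^{<\omega}$, so its dimension is infinite, with a reference to Trotter), whereas you give a self-contained three-point pigeonhole argument with the singletons $\{n_0\},\{n_1\},\{n_2\}$ and the doubleton $\{n_0,n_2\}$, using only that $f(\{n_0,n_2\})$ dominates the coordinatewise maximum $(a_{n_2},b_{n_0})$ in $\omega\times\alpha$; this is a more elementary route that avoids the external citation, and it is correct as written (you rightly do not need $f$ to preserve joins, only that an upper bound dominates the least upper bound of two chains' product). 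You also run the argument directly through the dichotomy rather than by contraposition, which is cosmetic. One small caution in your alternative (b): what you actually verify there is that an infinite antichain of $S$ yields an $\omega^{*}$-subchain of $\alpha$, whereas the implication you need is the converse, namely that an $\omega^{*}$-descending sequence in the $\alpha$-order yields an infinite antichain; this requires extracting from such a sequence a subsequence increasing in the $\omega$-order (Ramsey, or the fact that any infinite set of naturals listed in any order contains an infinite increasing subsequence). The fact is standard, and the paper's own proof asserts the same step without detail (``otherwise $S$ contains an infinite antichain''), but your write-up should state the implication in the direction actually used.
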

\begin{proof} Otherwise,  $S$ contains an infinite antichain and by Theorem \ref{thm:posetchap2} it contains a copy of $[\omega]^{<\omega}$. But this poset cannot be embedded in a sierpinskisation. Indeed,  a sierpinskisation is embeddable  into a product of two chains, whereas $[\omega]^{<\omega}$ cannot be embedded in a product of finitely many chains (for every integer $n$, it contains the power set $\mathfrak P(\{0,\dots ,n-1\})$ which cannot be embedded into  a product of less than $n$ chains;  its dimension, in the sense of Dushnik-Miller's notion of dimension, is infinite, see \cite{trotter}).\end{proof}

\subsection{Proof of item (ii) of Theorem \ref{ordinal}}

We prove first that there is a sierpinskisation $S$ of $\alpha$ and $\omega$ such that $Q:= I_{<\omega}(S)\in \J_{\alpha}$  is embeddable in $P$ by a map preserving finite joins.

\begin{theorem} \label{thm:qalphaprelim}Let $\alpha$ be a countable ordinal and $P\in \J_{\alpha}$. If $P$ is embeddable in $[\omega]^{<\omega}$ by a map preserving finite joins there is a sierpinskisation $S$ of $\alpha$ and $\omega$ such that $I_{<\omega}(S)\in \J_{\alpha}$ and $I_{<\omega}(S)$ is embeddable in $P$ by a map preserving finite joins.
\end{theorem}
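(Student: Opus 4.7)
The plan is to equip the underlying set $\alpha$ with a second linear order $\leq_{\omega'}$ of type $\omega$, tailored to the embedding $P \hookrightarrow [\omega]^{<\omega}$, and then take $S := (\alpha, \leq_\alpha \cap \leq_{\omega'})$. First, Lemma \ref{lem:11} applied to $R := \alpha$ (viewed as a chain) and to the embedding $I(\alpha) \hookrightarrow J(P)$ coming from $P \in \J_\alpha$ produces a map $h : \alpha \to P$ satisfying $(\ref{eq:notleq})$. Composing with the finite-join-preserving embedding $\phi : P \to [\omega]^{<\omega}$ gives $\tilde h := \phi \circ h$, which inherits the analogous non-inclusion property: $\tilde h(\beta) \not\subseteq \tilde h(\gamma_1) \cup \cdots \cup \tilde h(\gamma_n)$ whenever $\gamma_1,\ldots,\gamma_n <_\alpha \beta$. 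A standard finiteness argument then shows that
\[
A_\beta := \tilde h(\beta) \setminus \bigcup_{\gamma <_\alpha \beta} \tilde h(\gamma)
\]
is a nonempty finite subset of $\omega$ for every $\beta$. Choose a selector $f(\beta) \in A_\beta$; the resulting $f : \alpha \to \omega$ is injective, because $f(\beta_1) = f(\beta_2)$ with $\beta_1 <_\alpha \beta_2$ would put $f(\beta_2) \in \tilde h(\beta_1)$, contradicting $f(\beta_2) \in A_{\beta_2}$.

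The heart of the argument is to define $\leq_{\omega'}$. Introduce the relation $\prec$ on $\alpha$ by $\beta \prec \gamma$ iff $\beta \neq \gamma$ and $f(\beta) \in \tilde h(\gamma)$. Since $f(\beta) \in A_\beta$, the condition $\beta \prec \gamma$ forces $\gamma >_\alpha \beta$, so $\prec$ is contained in $<_\alpha$ and hence well-founded; and by the injectivity of $f$, the set of direct $\prec$-predecessors of any $\gamma$ has cardinality at most $|\tilde h(\gamma)|$. A well-founded induction on $\prec$ shows that the entire set $\{\beta : \beta \prec^{\ast} \gamma\}$ of iterated predecessors of $\gamma$ is finite: the inductive step expresses it as a finite union (indexed by the finitely many direct predecessors) of finite sets. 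Because $\alpha$ is countable and $\prec^{\ast}$ has finite principal lower sets, a standard topological sort extends $\prec$ to an $\omega$-type linear order $\leq_{\omega'}$ on $\alpha$. Setting $S := (\alpha, \leq_\alpha \cap \leq_{\omega'})$ gives a sierpinskisation of $\alpha$ and $\omega$ in which every principal initial segment is finite (being contained in the finite $\leq_{\omega'}$-downset of its generator).

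It remains to check that $h : S \to P$ satisfies $(\ref{eq:notleq})$ with respect to $\leq_S$. Given $s, s_1, \ldots, s_n \in S$ with $s \not\leq_S s_i$ for each $i$, the element $f(s)$ is a uniform witness that $\tilde h(s) \not\subseteq \bigcup_i \tilde h(s_i)$: for each $i$, either $s >_\alpha s_i$ and then $f(s) \notin \tilde h(s_i)$ by definition of $A_s$, or else $s <_\alpha s_i$ while $s >_{\omega'} s_i$, in which case $f(s) \in \tilde h(s_i)$ would force $s \prec s_i$ and hence $s <_{\omega'} s_i$, a contradiction. Lemma \ref{lem:11} then yields an embedding of $I(S)$ into $J(P)$ preserving arbitrary joins, items (\ref{item 3lem1.3}) and (\ref{item 5lem1.3}) of Proposition \ref{1.3} descend this to an embedding of $I_{<\omega}(S)$ into $P$ preserving finite joins, and Lemma \ref{lem:finitegenesierp} supplies $I_{<\omega}(S) \in \J_\alpha$. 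The main technical obstacle is the finite-predecessor argument for $\prec^{\ast}$: it depends essentially on $\prec$ being contained in the well-order $<_\alpha$ (for well-foundedness of the induction) while simultaneously having finite direct branching (coming from the finiteness of each $\tilde h(\gamma)$), and everything rests on being able to arrange both properties with a single choice of $f$.
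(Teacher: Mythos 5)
Your proof is correct and follows essentially the same route as the paper's: your selector $f(\beta)$, the finite sets $\tilde h(\beta)$ and the relation $\prec$ correspond exactly to the paper's $x_\beta$, $F_\beta$ and $\rho$, with the same well-founded induction (using that $\alpha$ is an ordinal) giving finite downsets and the same intersection of an $\omega$-type linear extension with $\leq_\alpha$ producing $S$. The only cosmetic difference is that you extract the data and rebuild the final embedding through Lemma \ref{lem:11} and Proposition \ref{1.3}, whereas the paper works directly with the ideal chain and the explicit map $\phi(I)=\bigcup\{F_\beta : x_\beta\in I\}$.
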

\begin{proof}
We construct first $R$  such that $I_{<\omega}(R)\in \J_{\alpha}$ and $I_{<\omega}(R)$ is embeddable in $P$ by a map preserving finite joins.

We may suppose that $P$ is a subset of $[\omega]^{<\omega}$ closed under finite unions. Thus   $J(P)$ identifies with  the set of arbitrary unions of members of $P$.
Let $(I_{\beta})_{\beta<\alpha+1}$ be  a strictly increasing sequence of ideals of $P$. For each $\beta<\alpha$ pick $x_{\beta}\in I_{\beta+1}\setminus I_{\beta}$ and $F_{\beta}\in P$ such that $x_\beta \in F_{\beta}\subseteq  I_{\beta+1}$.  Set $X:= \{x_{\beta}: \beta<\alpha\}$, $\rho:=\{(x_{\beta'},x_{\beta''}): \beta'<\beta''<\alpha\;  \text{and}\;  x_{\beta'}\in F_{\beta''}\}$. Let  $\hat \rho$ be the reflexive transitive closure of $\rho$. Since $\theta:= \{(x_{\beta'},x_{\beta''}): \beta'<\beta''<\alpha\}$ is a linear order containing $\rho$, $\hat \rho$ is an order on $X$. Let   $R:=(X, \hat \rho)$ be the resulting poset.
\begin{claim}\label{claim:jalpha} $I_{<\omega}(R)\in \J_{\alpha}$.
\end{claim}
\noindent { \bf Proof of claim \ref{claim:jalpha}.} The linear order
$\theta$ extends the order $\hat\rho$ and has type $\alpha$, thus
$I(R)$ has a maximal chain of type $I(\alpha)$. Since
$J(I_{<\omega}(R))$ is isomorphic to $I(R)$, $I_{<\omega}(R)$
belongs to $\J_{\alpha}$ as claimed.\endproof

\begin{claim}\label{claim:finite} For each $x\in X$, the initial segment $\downarrow x$ in  $R$ is finite.
\end{claim}
\noindent { \bf Proof of claim \ref{claim:finite}.} Suppose not. Let $\beta$ be minimum such that
for $x:=x_{\beta}$, $\downarrow x$ is infinite.
For each $y\in X$ with $y<x$ in $R$ select a finite sequence $(z_i(y))_{i\leq n_y}$ such that:
\begin{enumerate}
\item $z_{0}(y)=x$ and $z_{n_y}=y$.
\item \label{item:finite2}$(z_{i+1(y)}, z_{i}(y))\in \rho$ for all $i<n_{y}$.
\end{enumerate}
According to item \ref{item:finite2}, $z_{1}(y)\in F_{\beta}$. Since $F_\beta$ is finite, it contains some $x':=x_{\beta'}$ such that $z_{1}(y)= x'$ for infinitely many $y$. These elements belong to $\downarrow x'$. The fact that  $\beta'<\beta$  contradicts the choice of $x$.
\endproof

\begin{claim} \label{claim:union} Let $\phi$ be defined by  $\phi(I):= \bigcup \{F_{\beta}: x_\beta\in I\}$
for each $I\subseteq X$. Then:
\item $\phi$ induces an embedding  of $I(R)$ in $ J(P)$ and an embedding of $I_{<\omega}(R)$ in $P$.
\end{claim}
\noindent { \bf Proof of claim \ref{claim:union}.} We prove the
first part of the claim. Clearly, $\phi (I)\in J(P)$ for each
$I\subseteq X$. And trivially,  $\phi$ preserves arbitrary unions.
In particular, $\phi$ is order preserving. Its remains to show that
$\phi$ is one-to-one. For that, let $I,J\in I(R)$ such that
$\phi(I)=\phi(J)$. Suppose $J\not \subseteq I$.  Let $x_{\beta}\in
J\setminus I$, Since $x_\beta \in J$, $x_\beta\in F_{\beta}\subseteq
\phi(J)$. Since  $\phi(J)= \phi(I)$, $x_{\beta}\in \phi(I)$. Hence
$x_\beta \in F_{\beta'}$ for some $\beta'\in I$. If $\beta'<\beta$
then since $F_{\beta'}\subseteq I_{\beta'+1}\subseteq I_{\beta}$ and
$x_{\beta}\not \in I_{\beta}$, $x_{\beta}\not \in F_{\beta'}$. A
contradiction. On the other hand, if  $\beta<\beta'$ then,  since
$x_\beta\in F_{\beta'}$, $(x_\beta, x_{\beta' })\in \rho$. Since $I$
is an initial segment of $R$, $x_\beta\in I$. A contradiction too.
Consequently $J\subseteq I$. Exchanging the roles of $I$ and $J$,
yields $I\subseteq J$.  The equality $I=J$ follows. For the second
part of the claim, it suffices to show that $\phi(I)\in P$ for every
$I\in I_{<\omega }(R)$. This fact is a straightforward consequence
of Claim \ref{claim:finite}. Indeed, from this claim $I$ is finite.
Hence $\phi(I)$ is finite and thus belongs to $P$.

\begin{claim} \label{claim:linear} The order $\hat \rho$ has a linear extension of type  $\omega$.
\end{claim}
\noindent { \bf Proof of claim \ref{claim:linear}.}  Clearly, $[\omega]^{<\omega}$ has a linear extension of type $\omega$.  Since $R$ embeds  in $[\omega]^{<\omega}$, via an embedding  in $P$, the induced linear extension on $R$ has order type $\omega$.\endproof

Let $\rho'$ be the intersection of such a linear extension with the order $\theta$ and let $S:= (X, \rho')$.

\begin{claim} \label{claim:newembedding} For every $I\in I(S)$, resp. $I\in I_{<\omega}(S)$ we have $I\in I(R)$, resp. $I\in I_{<\omega}(R)$. \end{claim}
\noindent { \bf Proof of claim \ref{claim:newembedding}.} The first part of the proof follows directly from the fact that $\rho'$ is a linear extension of $\hat \rho$. The second part follows from the fact that each $I\in I_{<\omega}(S)$ is finite.
\endproof

It is then easy to check that the poset $S$ satisfies the properties stated in the theorem.\end{proof}

In order to conclude, it suffices to prove that one can replace $Q$ by  $Q_{\alpha}:=I_{<\omega}(S_{\alpha})$,  where  $S_{\alpha}$ is the sierpinskization  defined in the introduction. 

This fact follows directly from Lemma \ref{lem:qalphaprelim} below. It relies on properties of monotonic sierpinskizations, some already in  \cite{pz}. 

 We recall that  for a countable order type $\alpha'$, two monotonic sierpinskisations of $\omega\alpha'$ and $\omega$ are embeddable in each other and denoted by the same symbol $\Omega(\alpha')$ and we recall the  following result (cf. \cite{pz} Proposition 3.4.6. pp. 168).

\begin{lemma}\label{lem: proppouzag}
Let $\alpha'$ be a countable order type. Then $\Omega(\alpha')$ is embeddable in every sierpinskisation $S'$ of $\omega\alpha'$ and $\omega$.
\end{lemma}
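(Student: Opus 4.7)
The plan is to exhibit, inside any sierpinskisation $S'$ of $\omega\alpha'$ and $\omega$, a subset $T$ which is itself a \emph{monotonic} sierpinskisation of $\omega\alpha'$ and $\omega$; by the remark immediately preceding the lemma (any two monotonic sierpinskisations of $\omega\alpha'$ and $\omega$ embed into each other, and are both denoted $\Omega(\alpha')$), this will yield the desired embedding of $\Omega(\alpha')$ into $S'$. Write $S'$ as $(X,\leq_1\cap\leq_2)$, where $\leq_1$ is a linear order on $X$ of type $\omega\alpha'$ and $\leq_2$ is a linear order on $X$ of type $\omega$. Identify $(X,\leq_1)$, via the unique isomorphism, with the lexicographic order on $\bigsqcup_{\beta<\alpha'}F_\beta$, where $F_\beta:=\omega\times\{\beta\}$.

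The core step is to select, for each $\beta<\alpha'$ independently, an infinite subset $A_\beta\subseteq F_\beta$ on which the restrictions of $\leq_1$ and $\leq_2$ coincide. This is possible because both restrictions to $F_\beta$ are linear orders of type $\omega$, so the identity map $(F_\beta,\leq_1)\to(F_\beta,\leq_2)$ amounts to a permutation of a copy of $\omega$. Any permutation of $\omega$ admits an infinite increasing subsequence: otherwise one could extract an infinite strictly decreasing subsequence, which is impossible in a chain of type $\omega$. Taking $A_\beta$ to be the image of such an increasing subsequence ensures that $\leq_1$ and $\leq_2$ agree on $A_\beta$.

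Set $T:=\bigcup_{\beta<\alpha'}A_\beta$. Since each $A_\beta$ is infinite and the blocks $A_\beta$ sit lexicographically inside $(X,\leq_1)$, the restriction $\leq_1|_T$ still has type $\omega\alpha'$, while $\leq_2|_T$ has type $\omega$ as an infinite subset of a chain of type $\omega$. Hence $T$, equipped with the induced poset order $\leq_1|_T\cap\leq_2|_T$, is a sierpinskisation of $\omega\alpha'$ and $\omega$. Enumerate $T$ in $\leq_2|_T$-order by a bijection $\varphi\colon\omega\to T$, and identify $T$ with $\omega\alpha'$ through $\leq_1|_T$; by construction, on each fiber $A_\beta$ the orders $\leq_1$ and $\leq_2$ agree, which is precisely the statement that $\varphi^{-1}$ is order-preserving on each subset $\omega\times\{\beta\}$. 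Thus $T$ is a monotonic sierpinskisation of $\omega\alpha'$ and $\omega$, and the inclusion $T\hookrightarrow S'$ supplies the embedding.

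The only subtle point is recognizing that monotonicity is a purely fiberwise condition, so the $A_\beta$ can be chosen independently across the countably many indices $\beta<\alpha'$; no compatibility between distinct fibers is required, and the absence of infinite descending sequences in $\omega$ alone powers the extraction of each ``agreeing'' subset $A_\beta$. This is what makes the argument essentially a countable sequence of one-dimensional pigeonholes rather than a genuine higher-dimensional Ramsey problem.
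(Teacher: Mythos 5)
Your argument is correct. Note, however, that the paper itself offers no proof of this lemma: it is quoted from Pouzet--Zaguia (Proposition 3.4.6 of \cite{pz}), together with the remark that any two monotonic sierpinskisations of $\omega\alpha'$ and $\omega$ embed in each other. What you have done is supply a self-contained proof of the cited fact, and it is the natural one: decompose $(X,\leq_1)$ into its canonical consecutive $\omega$-blocks $F_\beta$ (canonical via the finite-interval condensation, so no appeal to a ``unique isomorphism'' is needed -- and none exists in general, e.g.\ for $\alpha'=\eta$); inside each block extract an infinite set $A_\beta$ on which $\leq_1$ and $\leq_2$ agree; observe that $T=\bigcup_\beta A_\beta$ with the induced order is again a sierpinskisation of $\omega\alpha'$ and $\omega$ which is monotonic by construction; and conclude by the recalled mutual embeddability of monotonic sierpinskisations, composing $\Omega(\alpha')\hookrightarrow T\hookrightarrow S'$. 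Two small points of hygiene: the dichotomy ``no infinite increasing subsequence implies an infinite strictly decreasing one'' is really Ramsey for pairs, and for an injective sequence in $\N$ you can avoid it altogether by the greedy choice $n_0<n_1<\cdots$ with $f(n_{k+1})=\min f(\{n:n>n_k\})$, which is increasing in value and also makes the choice of each $A_\beta$ canonical (so only the harmless selection over countably many $\beta$ remains). With these remarks your proof stands as a legitimate replacement for the external citation.
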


\begin{lemma}\label{lem:trivial}
Let $\alpha$ be a countably infinite order type and $S$ be a sierpinskisation of $\alpha$ and $\omega$.
Assume that $\alpha= \omega\alpha'+n$  where $n<\omega$. Then there is a subset of $S$ which is the direct sum $S'\oplus F$ of a sierpinskisation $S'$ of  $\omega\alpha'$ and $\omega$ with an $n$-element poset $F$.
\end{lemma}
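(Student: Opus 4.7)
The plan is to carve the required subset out of $S$ by removing only finitely many elements from its ``$\omega\alpha'$-part''. Let $X$ be the ground set of $S$ and let $\leq_\omega$ and $\leq_\alpha$ be the two linear extensions of $\leq_S$ of types $\omega$ and $\alpha$ respectively, so that $\leq_S = \leq_\omega \cap \leq_\alpha$. Using the decomposition $\alpha = \omega\alpha' + n$, I would split $X = A \cup B$, where $A$ is the $\leq_\alpha$-initial segment of type $\omega\alpha'$ and $B$ is the $\leq_\alpha$-final segment of size $n$. The $n$-element poset $F$ will simply be $B$ with the order induced by $\leq_S$; the sierpinskisation $S'$ will be a suitable cofinite subset $A' \subseteq A$.

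The key observation is that for $a \in A$ and $b \in B$ one always has $a <_\alpha b$, so in $\leq_S$ such $a$ and $b$ are incomparable exactly when $a >_\omega b$. I would therefore set $M := \{a \in A : a \leq_\omega b \text{ for some } b \in B\}$. Since $(X, \leq_\omega)$ has type $\omega$ and $B$ is finite, $M$ is finite. With $A' := A \setminus M$, every element of $A'$ is $>_\omega$ every element of $B$, and hence $\leq_S$-incomparable to every element of $B$. Consequently the subset $A' \cup B$ of $S$ carries the induced order equal to the disjoint (parallel) sum of $(A', \leq_S|_{A'})$ and $(B, \leq_S|_B)$.

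It remains to check that $(A', \leq_S|_{A'})$ is a sierpinskisation of $\omega\alpha'$ and $\omega$. Since this order is $\leq_\omega|_{A'} \cap \leq_\alpha|_{A'}$, a pair of linear orders on $A'$, one only has to verify their types. The restriction $\leq_\omega|_{A'}$ has type $\omega$ because $A'$ is cofinite in a chain of type $\omega$. For $\leq_\alpha|_{A'}$, I would argue that removing a finite set from a chain of type $\omega\alpha'$ leaves a chain of the same type: viewing $\omega\alpha'$ as the $\alpha'$-indexed sum of copies of $\omega$, any finite removal affects only finitely many copies, and within each affected copy the remaining set is a cofinite subset of $\omega$, hence still of type $\omega$, so the overall type is preserved.

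The only step with any genuine content is this last type-stability observation, which I expect to be the main (and quite mild) obstacle; everything else is bookkeeping. Once it is established, setting $F := (B, \leq_S|_B)$ and $S' := (A', \leq_S|_{A'})$ exhibits $A' \cup B \subseteq S$ as the desired direct sum $S' \oplus F$.
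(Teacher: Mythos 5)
Your proposal is correct and follows essentially the same route as the paper: in both, one keeps the final $n$ elements (your $B$) as $F$ and discards the finitely many elements of the $\omega\alpha'$-part lying $\leq_\omega$ some element of $B$, so that the remaining cofinite piece is incomparable to $F$ and, by the same finite-removal type-stability observation the paper uses implicitly, is again a sierpinskisation of $\omega\alpha'$ and $\omega$. The subset you construct is in fact exactly the one the paper obtains via the bijection $\varphi$ (the integers beyond the largest index hitting the last $n$ elements), so the two arguments differ only in presentation.
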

\begin{proof} Assume that $S$ is given by a bijective map  $\varphi$  from $\N$ onto a chain $C$ having order type $\alpha$. Let $A'$ be the set of the $n$ last elements of $C$,  $A:=\varphi^{-1}(A')$  and   $a$ be the largest element of $A$ in $\N$. The image of $]a \rightarrow )$ has order type $\omega\alpha'$, thus $S$ induces on $]a \rightarrow )$ a sierpinskisation $S'$ of $\omega\alpha'$ and $\omega$.  Let $F$ be the poset induced by $S$ on $A$. Since every  element of $S'$ is incomparable to every element of $F$
 these  two posets form a direct sum. \end{proof}

 Let  $\alpha$ be   a countably infinite order type such that 
$\alpha= \omega\alpha'+n$  where $n<\omega$.  We set $S_{\alpha}:=\Omega(\alpha')\oplus n$ and $Q_{\alpha}:=
I_{<\omega}( S_{\alpha})$.
 
\begin{lemma}\label{lem:qalphaprelim} $Q_{\alpha}\in \mathbb J_{\alpha}$ and for every sierpinskisation $S$ of $\alpha$ and $\omega$, $Q_{\alpha}$
 is embeddable in $I_{<\omega}(S)$ by a map preserving finite joins.
\end{lemma}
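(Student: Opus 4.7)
First, to establish $Q_\alpha \in \J_\alpha$, I would verify that $S_\alpha = \Omega(\alpha') \oplus n$ is itself a sierpinskisation of $\alpha = \omega\alpha' + n$ and $\omega$. The two linear extensions witnessing $\Omega(\alpha')$ as a sierpinskisation of $\omega\alpha'$ and $\omega$ can be combined with the $n$-chain in the obvious way: place the $n$-chain after the $\omega\alpha'$-extension to get an extension of $S_\alpha$ of type $\omega\alpha' + n = \alpha$, and before the $\omega$-extension to get one of type $n + \omega = \omega$. Lemma \ref{lem:finitegenesierp} then yields $Q_\alpha = I_{<\omega}(S_\alpha) \in \J_\alpha$ at once.

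For the embedding claim, let $S$ be any sierpinskisation of $\alpha$ and $\omega$. My plan is to build the required map as a composition of three finite-join-preserving embeddings using a product decomposition. By Lemma \ref{lem:trivial}, $S$ contains a subposet of the form $S' \oplus F$, where $S'$ is a sierpinskisation of $\omega\alpha'$ and $\omega$ and $F$ is some $n$-element poset (not necessarily a chain). The inclusion $S' \oplus F \hookrightarrow S$ of posets then induces a map $I_{<\omega}(S' \oplus F) \to I_{<\omega}(S)$ via $I \mapsto \downarrow_S I$, which is well defined because $S$ has a linear extension of type $\omega$, forcing every finitely generated initial segment of $S$ to be finite; it preserves finite joins since downward closure commutes with finite union, and is injective because $\downarrow_S I \cap (S' \oplus F) = I$ for every initial segment $I$ of $S' \oplus F$.

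The structural fact powering the reduction is that $I_{<\omega}$ converts direct sums of posets into direct products of join-semilattices: $I_{<\omega}(A \oplus B) \cong I_{<\omega}(A) \times I_{<\omega}(B)$ via $(I_A, I_B) \mapsto I_A \cup I_B$, since each initial segment of $A \oplus B$ splits canonically into its $A$- and $B$-parts and joins act componentwise. Applying this identity on both sides, the problem reduces to producing two finite-join-preserving embeddings: (a) $I_{<\omega}(\Omega(\alpha')) \hookrightarrow I_{<\omega}(S')$, and (b) $I_{<\omega}(\text{$n$-chain}) \hookrightarrow I_{<\omega}(F)$. For (a), Lemma \ref{lem: proppouzag} embeds $\Omega(\alpha')$ into $S'$ as posets, and the same $I \mapsto \downarrow_{S'} I$ construction extends this to the $I_{<\omega}$ level. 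For (b), observe that $I_{<\omega}(\text{$n$-chain})$ is simply the $(n+1)$-chain, whereas $I_{<\omega}(F) = I(F)$ always contains an $(n+1)$-chain obtained by taking the initial segments of a linear extension of $F$; an order embedding into a chain automatically preserves binary joins because joins in a chain are maxima.

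The step I expect to be the main obstacle is conceptual rather than computational: the natural attempt to first produce a poset embedding $S_\alpha \hookrightarrow S$ and then extend to $I_{<\omega}$ is doomed. The finite poset $F$ supplied by Lemma \ref{lem:trivial} is determined by $S$ and may have no chain of length $n$, so the $n$-chain summand of $S_\alpha$ cannot always be placed within $F$, and placing it elsewhere destroys the direct-sum structure with the copy of $\Omega(\alpha')$. The resolution is to pass to $I_{<\omega}$ before looking for the chain: regardless of the comparabilities inside $F$, any linear extension of $F$ produces an $(n+1)$-chain inside $I(F)$, and this extra slack is precisely what lets the embedding of $Q_\alpha$ into $I_{<\omega}(S)$ go through.
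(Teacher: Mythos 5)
Your proof is correct and follows essentially the same route as the paper: reduce via Lemma \ref{lem:trivial} to the direct sum $S'\oplus F$, use the isomorphism $I_{<\omega}(S'\oplus F)\cong I_{<\omega}(S')\times I_{<\omega}(F)$, embed $I_{<\omega}(\Omega(\alpha'))$ into $I_{<\omega}(S')$ via Lemma \ref{lem: proppouzag}, and embed the $(n+1)$-chain into $I(F)$. You merely merge the paper's two cases ($n=0$ and $n\neq 0$) into one uniform argument and spell out details the paper leaves implicit, such as why $S_{\alpha}$ is a sierpinskisation of $\alpha$ and $\omega$ and why $n+1$ embeds join-preservingly into $I(F)$.
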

\begin{proof} For the the first part, apply Lemma \ref{lem:finitegenesierp}.

{\bf Case 1.} $n=0$. By Lemma \ref{lem: proppouzag},   $
\Omega(\alpha')$ is embeddable in $S$. Thus $Q_{\alpha}$ is
embeddable in $I_{<\omega}(S)$ by a map preserving finite joins.

{\bf Case 2.} $n\not = 0$. Apply Lemma \ref{lem:trivial}. According to Case 1, $I_{<\omega}( \Omega(\alpha'))$ is embeddable in $I_{<\omega}(S')$. On an other hand $n+1$ is embeddable in $I_{<\omega}(F)=I(F)$.
Thus  $Q_{\alpha}$ which is isomorphic to the product $I_{<\omega}( \Omega(\alpha'))\times (n+1)$ is embeddable in  the product  $I_{<\omega}(S')\times I_{<\omega}(F)$. This product is   itself isomorphic to $I_{<\omega}(S'\oplus F)$. Since $S'\oplus F$ is embeddable in $S$, $I_{<\omega}(S'\oplus F)$ is embeddable in $I_{<\omega}(S)$ by a map preserving finite joins.  It follows that $Q_{\alpha}$ is embeddable in $I_{<\omega}(S)$ by a map preserving finite joins.
\end{proof}

\printindex


\begin{thebibliography}{999}

\bibitem{bonn-pouz1}R Bonnet, M Pouzet, Extensions et stratifications d'ensembles dispers\'es,
Comptes Rendus Acad. Sc.Paris, 268, S\'erie A, (1969),1512-1515.

\bibitem{chak} I.Chakir, Cha\^{\i}nes d'id\'eaux  et dimension alg\'ebrique des treillis distributifs, Th\`ese de doctorat, Universit\'e
Claude-Bernard(Lyon1) 18 d\'ecembre 1992, n° 1052.

\bibitem{cp} I.Chakir, M.Pouzet, The length of chains in distributive lattices, Notices of the A.M.S., 92 T-06-118, 502-503.

\bibitem{chapou} I.Chakir, M.Pouzet, Infinite independent sets in  distributive lattices, Algebra   Universalis {\bf 53}(2) 2005, 211-225.

\bibitem  {chapou2} I. Chakir, M. Pouzet, A characterization of well-founded algebraic lattices, submitted to Order, under revision.

\bibitem {chapou3} I.Chakir, M.Pouzet, The length of chains in algebraic modular lattices, ORDER {\bf 24}(4) (2007)227-247.
\bibitem {chapou4} I.Chakir, Conditions de cha\^{\i}nes dans les treillis alg\'ebriques, Universit\'e de Settat, Octobre 2007, 106 pp. fichier .pdf.
%

\bibitem{fraissetr}
R.~Fra{\"\i}ss{\'e}.
\newblock {\em Theory of relations}.
\newblock North-Holland Publishing Co., Amsterdam, 2000.


\bibitem  {grat}G.Gr\"atzer, General Lattice Theory, Birkh\"auser, Stuttgard, 1998.


\bibitem{higm}G. Higman, Ordering by divisibility in abstract algebras, Proc. London. Math. Soc. 2 (3),
(1952), 326-336.


\bibitem{pz} M. Pouzet and N. Zaguia, Ordered sets with no chains
of ideals of a given type, Order, {\bf 1} (1984), 159-172.


\bibitem  {trotter} W.T.~Trotter.
\newblock{\em  Combinatorics and Partially Ordered Sets: Dimension Theory,} The Johns
Hopkins University Press, Baltimore, MD, 1992.

%
\end{thebibliography}
\end{document}